\documentclass[11pt]{article}
\usepackage{authblk}
\usepackage{graphicx}
\usepackage{amsfonts}
\usepackage{amsbsy}
\usepackage{amssymb}
\usepackage{amsmath}
\usepackage{makeidx}
\usepackage{tikz-network}
\usepackage{graphicx}
\usepackage{epsf}
\usepackage{amsmath,amsthm}
\usepackage{latexsym}
\usepackage{epsfig}
\usepackage{hyperref}
\usepackage{graphics}
\usepackage{float, floatflt, here}
\newtheorem{thm}{Theorem}[section]

\newtheorem{defn}[thm]{Definition}

\newtheorem{lemma}[thm]{Lemma}

\newtheorem{rem}[thm]{Remark}

\newtheorem{eg}[thm]{Example.}
\title{Group action-stabilizer graph of group actions of a group on a set}
\date{{}}\author[1, 2]{Nepur Ranjan Hazarika \thanks{Email address : nepurrhazarika@gmail.com}}
\author[1]{Kukil Kalpa Rajkhowa\thanks{Corresponding author. Email address : kukilrajkhowa@yahoo.com}}

\small
\affil[1]{Department of Mathematics, Cotton University, Guwahati-781001, INDIA}
\affil[2]{Department of Mathematics, North Gauhati College, College Nagar, Guwahati-781031, INDIA}

\begin{document}
	\maketitle{}

	\begin{abstract}
		In this paper we introduce the group action-stabilizer graph $GAS(G)$ of a group $G$ on a set $X$ with vertex set as the collection of all the group actions of $G$ on $X$, and any two vertices $\phi$ and $\psi$ are adjacent if and only if the non-trivial subgroups $\cap G_x^\phi$ and $\cap G_x^\psi$ of $G$ intersect non-trivially, where $G_x^\phi$ and $G_x^\psi$ are two stabilizers of $x$ with respect to the actions $\phi$ and $\psi$, respectively.
        We characterize a special subgraph $gas(G)$ of $GAS(G)$ in which the vertex set contains the actions $\phi$ of $G$ on $X$ such that $\cap G_x^\phi$'s are distinct. We determine the number of group actions within some specific groups and find certain conditions under which $GAS(G)$ is equal to $gas(G)$. We also examine the conditions under which $gas(G)$ and its complement are derived graph for a finite nilpotent group $G$.
	\end{abstract}
	
	$\mathbf{2020\;Mathematics\;Subject\;Classification:}\;05C25, 05E18, 14L30, 20K30.$
	
	$\mathbf{Keywords\;and\;Phrases:}$ group action, stabilizer, group homomorphism graph, intersection graph, normal subgroup.

	\section{Introduction}
The introduction of the intersection graph $\Gamma(G)$ of an abelian group $G$ by Zelinka \cite{Zelinka} was one of the significant discoveries in the growing fields of the interdisciplinary study of algebra and graph theory, which was further carried out by Chakrabarty et al. to introduce intersection graphs of ideals of rings \cite{Chakrabarty} in 2009.	

 In 1990, Annexstein et al. introduced the group action graph \cite{Annexstein}, which was further extended to define Cayley's graph. In order to examine certain structural and algorithmic characteristics of the interconnection networks that support parallel architectures, they created an algebraic framework. In  \cite{BB},  Barman and Rajkhowa defined the group homomorphism graph which connects the group homomorphisms with graph. The vertex of this group homomorphism graph is the collection of all non-trivial homomorphisms from a group to another group, excluding the monomorphisms, and two distinct vertices are adjacent if and only if the kernels of the homomorphisms intersect non-trivially.  

    In this chapter, we first address the impact of group actions on group homomorphism graphs. Our primary objective is to introduce and investigate the fundamental characteristics of group action-stabilizer graphs. We introduce a special type of subgraph $gas(G)$ of $GAS(G)$ to dive deeper into $GAS(G)$. We examine certain properties of $GAS(G)$ and $gas(G)$ for a wide range of groups, such as $D_n, S_n, \mathbb{Z}_n$, regular groups, topological groups, etc. One of our key findings relates to quantifying the number of group actions in some particular groups and to identifying the conditions under which $GAS(G)$ is equivalent to $gas(G)$.
	 
We now recollect some terminologies that are already in the literature.

    If two vertices $x$ and $y$ are adjacent, then we write $x \sim y$, and $x \nsim y$, otherwise. A complete graph with $n$ vertices is denoted by $K_n$. A null graph, denoted by $K_0$, does not contain any vertex. A star graph contains a center that is adjacent to the remaining vertices of the graph, and no two remaining vertices are mutually adjacent. We denote a path of length $n$ by $P_n$. An induced cycle with $n$ vertices is termed an $n$-cycle. The girth of a graph $K$, denoted by $girth(K)$, is the length of the smallest n-cycle. The diameter of a graph is the longest distance between any two vertices of the graph.	A derived graph of a given graph is a graph whose vertices are the edges of the given graph, and any two vertices are adjacent if and only if the corresponding edges of the given graph are adjacent. A complete bipartite graph is denoted by $K_{m,n}$, where the two partite sets contain $m$ and $n$ vertices, respectively.
    
	A group $G$ is said to act on a set $X$ if there is a map $\star:G\times X\to X$ that satisfies $\star(e,x)=x$, where $e$ is the identity element of G, and for all $g,h$ in $G$, $\star(g,\star(h,x))=\star(gh,x)$. The collection of all elements $g$ of $G$ such that $\star(g,x)=x$ is known as the stabilizer $G_x^\star$ of $x$. A regular group usually refers to a regular group action, in which a group acts transitively with only the identity element fixing any given element. That is, an action $\star$ of a group $G$ on a set $X$ is regular if for any $x,y\in X$, there exists a unique $g\in G$ such that $\star(g,x)=y$.
.
    
    The group of all homeomorphisms from a topological space to itself, with the function composition acting as the group operation, is known as the homeomorphism group of that topological space. The homeomorphism group has a natural action in space \cite{Barzanouni, Dijkstra}. Let $X$ be a topological space, and we denote the homeomorphism group of $X$ by $G$. The action $\star$ is defined from $G\times X\to X$ as $\star(\psi, x)=\psi(x)$. 

   Any undefined terminologies are available in \cite{Bhattacharya, dummit, FH}.
   
	\section {Preliminaries: group action on group homomorphism graph}
	 We start this section with Theorem \ref{Action homorphism} from \cite{Bhattacharya}, which connects group action with group homomorphism. 
	\begin{thm}[\cite{Bhattacharya}]\label{Action homorphism}
		Consider a group $G$ and a set $X$. Then \\(i) any action of $G$ on $X$ induces a homomorphism from $G$ to the symmetric group $S_X$ of $X$.\\(ii) any homomorphism $\phi$ from $G$ to the symmetric group $S_X$ of $X$ induces an action of $G$ onto $X$.
	\end{thm}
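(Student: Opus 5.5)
For part (i), start from an action $\star:G\times X\to X$ and, for each $g\in G$, define $\sigma_g:X\to X$ by $\sigma_g(x)=\star(g,x)$. First check that $\sigma_g$ lies in $S_X$. The axioms give
\[
\sigma_g\circ\sigma_{g^{-1}}(x)=\star(g,\star(g^{-1},x))=\star(e,x)=x,
\]
and the same computation holds with $g$ and $g^{-1}$ swapped. So $\sigma_{g^{-1}}$ is a two-sided inverse of $\sigma_g$, and $\sigma_g$ is a bijection. Next set $\Phi:G\to S_X$, $\Phi(g)=\sigma_g$. The compatibility axiom gives $\sigma_g\circ\sigma_h(x)=\star(g,\star(h,x))=\star(gh,x)=\sigma_{gh}(x)$, so $\Phi(gh)=\Phi(g)\circ\Phi(h)$. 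This is exactly the homomorphism property when $S_X$ carries the composition $(\alpha\beta)(x)=\alpha(\beta(x))$.

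For part (ii), reverse the construction. Given a homomorphism $\phi:G\to S_X$, define $\star(g,x)=\phi(g)(x)$. A homomorphism sends the identity to the identity, so $\phi(e)=\mathrm{id}_X$ and $\star(e,x)=x$. Also
\[
\star(g,\star(h,x))=\phi(g)(\phi(h)(x))=(\phi(g)\phi(h))(x)=\phi(gh)(x)=\star(gh,x),
\]
so $\star$ is an action. One can also note that the two constructions are mutually inverse, giving a bijection between actions of $G$ on $X$ and $\mathrm{Hom}(G,S_X)$; the later counting of actions relies on this, but it follows immediately from the definitions.

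The only place needing care is the convention for multiplication in $S_X$. With composition read right to left, the left action defined in the paper matches the homomorphism property directly. Under the opposite convention one would instead use $g\mapsto\sigma_{g^{-1}}$, or note that one gets an anti-homomorphism. I would fix the right-to-left convention at the outset so the argument is a straightforward check of the axioms.
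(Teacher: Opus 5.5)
Your proof is correct and complete. The paper gives no proof of its own here and simply cites the textbook of Bhattacharya et al.; the standard argument there is the construction you give: $\sigma_g(x)=\star(g,x)$ with $\sigma_{g^{-1}}$ as inverse for (i), and $\star(g,x)=\phi(g)(x)$ for (ii), including the same care about the right-to-left composition convention in $S_X$.
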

    We consider the following example of the group homomorphism graph of $S_n$. 
    \begin{eg}\label{eg Sn}
		Let $G=S_n$ and $A=\{1, 2,\cdots,n\}$. Then we have group action $\star$ from $G\times A \to A$ as $\star(f,x_j)=f(j), f\in G$. Then the group homomorphism graph is a null graph, where the group homomorphisms are obtained from the actions of $S_n$ on the set $A$.
	\end{eg}

     We now use Theorem \ref{Action homorphism} to construct Theorem \ref{action}.
	\begin{thm}\label{action}
		Any homomorphism $\phi$ from a group $G$ to a finite group $G^\prime$ induces an action of $G$ onto $G^\prime$. 
	\end{thm}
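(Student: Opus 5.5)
The plan is to reduce the statement to Theorem \ref{Action homorphism}(ii) by passing through the left regular representation of $G^\prime$. Writing $S_{G^\prime}$ for the symmetric group on the underlying set of $G^\prime$, Cayley's construction gives a map $\lambda: G^\prime \to S_{G^\prime}$, $\lambda(a)(b)=ab$. We check that each $\lambda(a)$ is a bijection, with inverse $\lambda(a^{-1})$. We also check that $\lambda(aa^\prime)=\lambda(a)\circ\lambda(a^\prime)$, which holds by associativity in $G^\prime$. Hence $\lambda$ is a group homomorphism.

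Next, given a homomorphism $\phi: G\to G^\prime$, we form the composite $\lambda\circ\phi: G \to S_{G^\prime}$. As a composite of homomorphisms it is again a homomorphism. Applying Theorem \ref{Action homorphism}(ii) with $X=G^\prime$, the homomorphism $\lambda\circ\phi$ induces an action of $G$ on $G^\prime$. Explicitly, the action is
\[
\star(g,x)=(\lambda\circ\phi)(g)(x)=\phi(g)x, \qquad g\in G,\ x\in G^\prime.
\]
For completeness we verify the axioms directly. The identity axiom holds because $\star(e,x)=\phi(e)x=e^\prime x=x$. Compatibility holds because
\[
\star(g,\star(h,x))=\phi(g)\phi(h)x=\phi(gh)x=\star(gh,x).
\]
This direct check makes the appeal to Theorem \ref{Action homorphism} essentially transparent.

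The one point of care is the phrase ``onto $G^\prime$''. I read it as ``on the set $G^\prime$'', matching Theorem \ref{Action homorphism}(ii); no surjectivity or transitivity is being claimed. Finiteness of $G^\prime$ is not actually needed, since the same argument works for any group $G^\prime$; it only guarantees that $S_{G^\prime}$ is a finite symmetric group. If transitivity were intended, the statement would fail in general: for $\phi$ trivial, every orbit is a singleton. So I would state the result as producing an action, and remark that it is transitive exactly when $\phi$ is surjective, because the orbit of $e^\prime$ is $\phi(G)$. I expect no step to be a real obstacle; the only delicate part is this interpretive issue.
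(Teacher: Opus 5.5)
Your proposal is correct and follows the paper's route: compose $\phi$ with the Cayley (left regular) map $G^\prime\to S_{G^\prime}$ and invoke Theorem \ref{Action homorphism}(ii) with $X=G^\prime$. Your version is slightly more accurate than the paper's, which calls the Cayley map an isomorphism $G^\prime\to S_{G^\prime}$ (false once $|G^\prime|\ge 3$); you correctly use only that it is a homomorphism, and you rightly observe that finiteness of $G^\prime$ plays no role.
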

	\begin{proof}
		Let there be a homomorphism $\phi$ from $G$ to $G^\prime$, where $G^\prime$ is finite. By Cayley's theorem, there is an isomorphism $f$ from $G^\prime$ to its symmetric group $S_{G^\prime}$. We define a homomorphism $g=f\circ \phi$ from $G$ to $S_{G^\prime}$. Then, by Theorem \ref{Action homorphism}, we get a group action of $G$ onto ${G^\prime}$.
	\end{proof}

    Remark \ref{intersection stabilizer} states the relation between kernel of a group homomorphism from a group $G$ to $S_X$ and the stabilizers of the corresponding action of $G$ on $X$.
	\begin{rem}\label{intersection stabilizer}
		The kernel of a homomorphism from a group $G$ to $S_X$ induced by a group action of $G$ onto $X$ is the intersection of all stabilizers of the elements of $X$. 
	\end{rem}
    \begin{rem}\label{thm CRT}
	Let $G$ be a group. Let $N_i\trianglelefteq G$ for any $i\in\{1,2,\dots, r\}$. If $\left(\bigcap\limits_{i=1}^{r-1}N_i\right)N_r=G$ for any integer $r$, then $\frac{G}{\bigcap\limits_{i=1}^{r}N_i}\cong \frac{G}{N_1}\times \frac{G}{N_2}\times\dots\times\frac{G}{N_r}$.
\end{rem}
    We now relate the adjacency criterion of two epimorphisms to the group action from where the epimorphisms are induced.
	\begin{thm}\label{orbit}
	Let the number of stabilizers $G_{x_i}$ of elements of a set $X$ be $n$ for a fixed action of $G$ on a set $X$. Assume that all of them are normal subgroups of $G$ and $G=\left(\bigcap\limits_{i=1}^{j-1}G_{x_i}\right)G_{x_j}$ for any positive integer $j$. If two epimorphisms $\phi_1$ and $\phi_2$ from $G$ to $S_X$ are adjacent, then $\phi_1$ and $\phi_2$ cannot be induced by the same group action. 
\end{thm}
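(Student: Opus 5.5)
Since a single group action induces exactly one homomorphism $G\to S_X$ by Theorem \ref{Action homorphism}, the statement is really the claim that two adjacent vertices must be distinct homomorphisms, or equivalently that a homomorphism induced by an action of this kind cannot be adjacent to itself. I will argue by contradiction. Suppose $\phi_1$ and $\phi_2$ are induced by the same action of $G$ on $X$. By Theorem \ref{Action homorphism}(i), an action determines a unique homomorphism into $S_X$, so $\phi_1=\phi_2$. In the group homomorphism graph of \cite{BB} adjacency is only defined between two distinct vertices, so a vertex is never adjacent to itself. Hence $\phi_1\nsim\phi_2$, contradicting the hypothesis.

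To make the role of the hypotheses visible, I will also identify the common kernel. By Remark \ref{intersection stabilizer}, $\ker\phi_1=\ker\phi_2=\bigcap_{i=1}^{n}G_{x_i}$. The stabilizers are normal and satisfy $G=\left(\bigcap_{i=1}^{j-1}G_{x_i}\right)G_{x_j}$, so Remark \ref{thm CRT} gives an embedding
\[
G/\bigcap_{i=1}^{n} G_{x_i}\cong \prod_{i=1}^{n} G/G_{x_i}.
\]
This tells us what the common kernel looks like and what image an epimorphism onto $S_X$ would have to have. However, the contradiction itself does not depend on this computation: it comes from the uniqueness of the induced homomorphism together with distinctness being part of the definition of an edge.

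The step I expect to be the main obstacle is settling what the statement is meant to say and how much of the hypothesis it actually uses. If adjacency required only that the kernels intersect non-trivially, without requiring the vertices to be distinct, then a single vertex could count as adjacent to itself whenever its kernel is non-trivial. In that case the argument above would fail. I would then need to use the structure from Remark \ref{thm CRT} to show that the kernel of an epimorphism under these hypotheses is trivial, which would exclude such a vertex. That could only happen if $S_X$ embeds in the product $\prod_{i} G/G_{x_i}$, and that bound is not obviously available. I will therefore rest the proof on distinctness and note the kernel computation only as a remark.
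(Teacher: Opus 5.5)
Your proof is correct, and it takes a more direct route than the paper's.

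\textbf{The paper's route.} The paper first uses Remark~\ref{thm CRT} and the First Isomorphism Theorem to get $\operatorname{Im}\phi\cong\prod_i G/G_{x_i}$, and from this it infers that homomorphisms induced by one action have the same range. It then supposes that distinct $\phi_1,\phi_2$ come from the same action. It asserts that equal kernels together with equal ranges would force $\phi_1=\phi_2$, so the kernels must differ. Finally, it contradicts this with Remark~\ref{intersection stabilizer}, because the kernel $\bigcap_x G_x$ depends only on the action.

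\textbf{Your route.} You skip the comparison of kernels and ranges. You use that the action determines the induced homomorphism outright, and that edges in the homomorphism graph join distinct vertices. Make the uniqueness explicit: it comes from the construction $g\mapsto(x\mapsto g\cdot x)$, not from the bare existence statement in Theorem~\ref{Action homorphism}(i).

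\textbf{What each route buys.} Your route avoids the paper's step ``same kernel and same range implies same homomorphism''. That step is false for homomorphisms in general, since composing with a non-trivial automorphism of the image preserves both kernel and range, and the paper does not justify it. Your route also shows that normality, the product condition and surjectivity play no role. The paper's route adds only the description of the image, which the conclusion does not need.

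\textbf{On your last paragraph.} The fallback you consider could not succeed. If a vertex could be adjacent to itself, the statement would be false. Take $G=\mathbb{Z}_2\times\mathbb{Z}_2$ acting on a two-point set through the first factor. There is a single stabilizer $\{0\}\times\mathbb{Z}_2$, which is normal, so the hypotheses hold with $n=1$. The induced epimorphism onto $S_2$ has non-trivial kernel, so it would be adjacent to itself. Resting the proof on distinctness is therefore not merely convenient but necessary.
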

\begin{proof}
	As $G=\left(\bigcap\limits_{i=1}^{j-1}G_{x_i}\right)G_{x_j}$ for any positive integer $j$ and $G_{x_i}$s are all normal subgroups, by Remark \ref{thm CRT}, $\frac{G}{\bigcap G_{x_i}}\cong \frac{G}{G_{x_1}}\times\frac{G}{G_{x_2}}\times\dots \times \frac{G}{G_{x_j}}$. Therefore, by Remark \ref{intersection stabilizer}, we get $\frac{G}{\text{Ker }\phi}\cong \frac{G}{G_{x_1}}\times\frac{G}{G_{x_2}}\times\dots \times \frac{G}{G_{x_j}}$, where $\phi$ is an epimorphism from $G$ to $S_X$ induced by the group action of $G$ onto $X$. 
	By the First Isomorphism Theorem,
	\[
	\operatorname{Im}(\phi)\cong \frac{G}{\ker\phi}
	\cong \frac{G}{G_{x_1}}\times\frac{G}{G_{x_2}}\times\dots \times \frac{G}{G_{x_n}}.
	\] Therefore, the epimorphisms induced by a group action have the same range. 
	
	
	
	
	Now assume that $\phi_1$ and $\phi_2$ are distinct group homomorphisms induced by the same group action. Then $Range(\phi_1)= Range (\phi_2)$. If $Ker(\phi_1)= Ker (\phi_2)$, then $\phi_1=\phi_2$. Therefore, $Ker(\phi_1)\ne Ker (\phi_2)$. Since, $\phi_1\sim \phi_2$, $Ker(\phi_1)\cap  Ker(\phi_2)\ne\{e\}$. By Remark \ref{intersection stabilizer}, as the kernels of homomorphisms induced by the action depend only on the action and the fixed points of \(X\), $Ker(\phi_1)= Ker (\phi_2)$. Hence, $\phi_1$ and $\phi_2$ cannot be induced by the same group action.
\end{proof}

	\section {Group action stabilizer graph of group actions of $G$ acts on $X$}
	\begin{defn}
    The group action stabilizer graph $GAS(G)$ of group actions of $G$ on a set $X$ is a graph with vertices as the group actions of $G$ on $X$ and any two vertices $\phi$ and $\psi$ are adjacent if and only if $\{\bigcap\limits_{x \in X} G_x^{\phi}\} \cap \{\bigcap\limits_{x \in X} G_x^{\psi}\} \ne \{e\}$. Here, the trivial action is removed from the vertex set of $GAS(G)$ as it is an isolated vertex in $GAS(G)$.        
	\end{defn}
	
    A group action of $G$ on $X$ induces a homomorphism from $G$ to the symmetric group of $X$. By Remark \ref{intersection stabilizer}, the group homomorphism graph can be equivalently observed in terms of the group action stabilizer graph. Moreover, the group action stabilizer graph considers actions on an arbitrary set, while the group homomorphism graph only considers homomorphisms between groups.

	It is noted that $\bigcap\limits_{x \in X} G_x^{\phi}$ for any action $\phi$ of $G$ on $X$ is a normal subgroup of $G$.


    We first consider the example of the group action-stabilizer graph of the quaternion group $Q_8$.
    \begin{eg}
        Let $\phi$ be a vertex of $GAS(Q_8)$. Then $-1$ is in $\cap G_x^{\phi}$. Hence, $GAS(Q_8)$ is complete. 
    \end{eg}
    
    We now consider the dihedral group $D_n=\langle r,s:r^n=s^2=(rs)^2=e\rangle$. One important result on $D_n$ was established by K. Conrad in \cite{Conrad}.
    \begin{lemma}[\cite{Conrad}]\label{conrad}
        In $D_n$, every subgroup of $\langle r\rangle$ is a normal subgroup; these are the subgroups $\langle r^d\rangle$ having index $2d$ for $d | n$. This sets out all proper normal subgroups of $D_n$ for odd $n$, noting that the only additional proper normal subgroups for even $n$ are $\langle r^2, s \rangle$ and $\langle r^2, rs \rangle$, both having index 2.
    \end{lemma}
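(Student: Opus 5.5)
We need to prove the statement of Lemma \ref{conrad}: every subgroup of $\langle r\rangle$ is normal in $D_n$, these are exactly the $\langle r^d\rangle$ with $d\mid n$ and index $2d$, and they are all the proper normal subgroups when $n$ is odd. For even $n$, the only additional proper normal subgroups are $\langle r^2,s\rangle$ and $\langle r^2,rs\rangle$, each of index $2$.

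\textbf{Subgroups of the rotation subgroup.} Since $\langle r\rangle$ is cyclic of order $n$, its subgroups are exactly $\langle r^d\rangle$ for $d\mid n$. Such a subgroup has order $n/d$, hence index $2n/(n/d)=2d$ in $D_n$, which has order $2n$. It is normal because it is invariant under the generators:
\[
r\,r^{d}r^{-1}=r^{d}, \qquad s\,r^{d}s^{-1}=r^{-d}.
\]
This settles the first sentence of the lemma.

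\textbf{The key observation.} Let $N$ be a proper normal subgroup containing some reflection $r^{i}s$. Conjugating by $r$ gives
\[
r\,(r^{i}s)\,r^{-1}=r^{i+2}s,
\]
so the conjugacy class of $r^{i}s$ contains every $r^{i+2k}s$. Multiplying two of these elements, $(r^{i}s)(r^{i+2}s)=r^{-2}$, so $N\supseteq\langle r^2\rangle$.

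\textbf{Odd $n$.} Here $r^2$ generates $\langle r\rangle$, so $r\in N$ together with a reflection, and therefore $N=D_n$. This contradicts properness. Hence every proper normal subgroup lies inside $\langle r\rangle$, and the list for odd $n$ is complete.

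\textbf{Even $n$.} Now $\langle r^2\rangle$ has index $2$ in $\langle r\rangle$.
\begin{itemize}
\item If $N$ contained $r$, then $N\supseteq\langle r\rangle\cup\{r^{i}s\}$, which generates $D_n$; so $r\notin N$.
\item Thus $N\cap\langle r\rangle=\langle r^2\rangle$, and $N$ contains the class $\{r^{i+2k}s\}$, which consists of all reflections of one parity of $i$.
\item Therefore $N=\langle r^2,s\rangle$ when $i$ is even, or $N=\langle r^2,rs\rangle$ when $i$ is odd. Each has order $n/2+n/2=n$, so index $2$.
\item Both are normal, since every subgroup of index $2$ is normal.
\end{itemize}

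The main care point is this conjugacy computation together with the parity split; everything else is routine.
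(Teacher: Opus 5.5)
The paper gives no proof of this lemma; it quotes it from Conrad. Your argument is correct and is essentially Conrad's standard proof: show $\langle r^d\rangle$ is normal by conjugating with the generators, conjugate a reflection $r^is\in N$ by $r$ to force $r^2\in N$, and then split on the parity of $n$.

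The one step you leave implicit is that, for even $n$, $N$ contains no reflection $r^js$ with $j\not\equiv i \pmod 2$. This holds because $(r^is)(r^js)=r^{i-j}$ would then be an odd power of $r$ lying in $N\cap\langle r\rangle=\langle r^2\rangle$, which is impossible. Equivalently, $N\cap\langle r\rangle$ has index $2$ in $N$, so $N=\langle r^2\rangle\cup\langle r^2\rangle r^is$.
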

    We draw a remark from Lemma \ref{conrad}.
    \begin{rem}
        It can be seen from Lemma \ref{conrad} that every normal subgroup of $D_n$ contains the element $r^2(\ne e)$ for all even $n\ge 4$. It is also obvious that $GAS(D_n)$ is a complete graph for $n=1$ and $2$. Hence $GAS(D_n)$ is complete for even $n$. Also, for any odd prime $n$, there is only one normal subgroup. Hence, the corresponding $GAS(G)$ is complete.
    \end{rem}

    We now observe the group action-stabilizer graph of the symmetric group $S_n$ on an arbitrary set $X$. Since $G = S_n$, the group actions corresponding to $GAS(G)$ are the actions of Example \ref{example Sn GAS}. Then the stabilizer of each point $x$ in $X$ is $\{f\in S_n|f(x)=x\}=S_{X-\{x\}}$. Thus, the intersection of the stabilizers of all points in $X$ is: $\bigcap\limits_{x \in X} G_x = \bigcap\limits_{x \in X} S_{X-\{x\}} = \{e\}$. This implies that $GAS(G)$ is a null graph. 
    
	We investigate the completeness property of $GAS(G)$ in the next result. We recollect that every regular action of $G$ on $G$ is equivalent to the action given by the left multiplication.
	\begin{thm}
	If the center of $G$ is non-trivial and $X=G$, then\\
	(i) the subgraph of $GAS(G)$ made by the regular group actions of $G$ on $X$ is a null graph.\\
	(ii) the subgraph of $GAS(G)$ made by the conjugation group actions of $G$ on $X$ is complete.
\end{thm}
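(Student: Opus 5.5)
Both parts reduce, via Remark \ref{intersection stabilizer}, to computing the normal subgroup $K_\phi=\bigcap_{x\in X}G_x^{\phi}$ for each action $\phi$ in question and checking whether two such subgroups meet non-trivially. The plan is to compute $K_\phi$ explicitly for the regular actions and for the conjugation action. Adjacency then follows directly from the definition of $GAS(G)$.

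For part (i), take a regular action $\phi$ of $G$ on $X=G$. By the recollection preceding the statement, $\phi$ is equivalent to left multiplication $\phi(g,x)=gx$. The key step is that regularity already forces every point stabilizer to be trivial: if $gx=x$, then uniqueness of the element sending $x$ to $x$ gives $g=e$. Hence $G_x^{\phi}=\{e\}$ for every $x$, and so $K_\phi=\{e\}$. For two regular actions $\phi,\psi$ this gives $K_\phi\cap K_\psi=\{e\}$, so no edge occurs among them. The induced subgraph on the regular actions therefore has no edges, which is the sense of ``null'' intended in the statement. Strictly, these vertices have trivial kernel and so behave like the isolated vertices the definition discards; I would remark that the subgraph they induce is edgeless, or empty if such vertices are excluded. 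Part (i) does not use the hypothesis on the centre.

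For part (ii), consider the conjugation action $\psi(g,x)=gxg^{-1}$ of $G$ on $X=G$. Here $G_x^{\psi}=C_G(x)$, the centralizer of $x$. The kernel is therefore $K_\psi=\bigcap_{x\in G}C_G(x)=Z(G)$, which is non-trivial by hypothesis, so $\psi$ is a genuine (non-trivial) vertex. Any other action in this family, for instance one twisted by an automorphism $\alpha$ as in $(g,x)\mapsto \alpha(g)x\alpha(g)^{-1}$, has kernel $\alpha^{-1}(Z(G))=Z(G)$, since the centre is characteristic. So every two conjugation-type vertices $\psi_1,\psi_2$ satisfy $K_{\psi_1}\cap K_{\psi_2}=Z(G)\ne\{e\}$ and are adjacent. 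The induced subgraph is therefore complete.

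The main obstacle is interpretive rather than computational. It lies in pinning down what ``the conjugation group actions'' means as a family of several vertices. I would first treat it as the actions $g\cdot x=\alpha(g)x\alpha(g)^{-1}$ for $\alpha\in\operatorname{Aut}(G)$, and use that $Z(G)$ is characteristic to show that all their kernels coincide with $Z(G)$. If only the single standard conjugation action is meant, completeness is trivial, since $K_4$ is replaced by $K_1$. The rest is the routine identification of stabilizers with trivial subgroups and with centralizers, respectively.
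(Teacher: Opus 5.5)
Your proposal is correct and follows the paper's proof: in (i) regular actions have trivial stabilizers, so the kernel is trivial and no edges occur, and in (ii) the stabilizers are centralizers, so the kernel is $Z(G)\neq\{e\}$ for every conjugation action. Two points the paper leaves implicit are made explicit in your version: you get trivial stabilizers directly from the uniqueness clause of regularity rather than from the left-multiplication model, and you use that $Z(G)$ is characteristic to cover automorphism-twisted conjugation actions.
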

\begin{proof}
	(i) Since $G$ acts on $G$ by left multiplication, for any action $\lambda$, we have $\lambda(g,x)=gx$. Therefore, $G_x^{\lambda}=\{g\in G|gx=x\}=\{e\}$. This implies that $\bigcap\limits_{x\in X}G_x^{\lambda}=\{e\}$ for every action $\lambda$. Hence, the subgraph of $GAS(G)$ made by the regular group actions of $G$ on $X$ is a null graph.\\	
	(ii) Let us consider any two actions $\lambda_1$ and $\lambda_2$ made by the conjugation group actions of $G$ on $X$. Now, the stabilizer of an element $x$ in $G$ under $\lambda_i$ for any $i=1,2$ is $\{g\in G: gxg^{-1}=x\}=\{g\in G: gx=xg\}$, which is simply the centralizer of $x$ in $G$, denoted $C_G(x)$. Therefore, the intersection of all the stabilizers of $G$ with respect to any action is the center of $G$. As $G$ has a non-trivial center, therefore, $\bigcap\limits_{x\in G}G_x^{\lambda_1}=\bigcap\limits_{x\in G}G_x^{\lambda_2}\ne \{e\}$. This implies that $\lambda_1\sim \lambda_2$. Hence, $GAS(G)$ is complete.
\end{proof}

We now observe the properties of $GAS(G)$, when the group $G$ is transitive and homeomorphism. We mention the definition of topologically conjugate group actions given by Barzanouni et al. \cite{Barzanouni} in Remark \ref{Barzanouni} to establish Theorem \ref{transitive barzanouni}.

\begin{rem}[\cite{Barzanouni}]\label{Barzanouni}
    If $\phi_1 : G\times X \to X$ and $\phi_2 : G\times Y \to Y$ be two actions, then $\phi_1$ and $\phi_2$ are said to be topologically conjugate if there exists a homeomorphism $h : X \to Y$ such that $h\circ \phi_1(g,x)=\phi_2(g,h(x))$, for all $g \in G$ and $x\in X$. The homeomorphism $h$ is called as conjugacy between $\phi_1$ and $\phi_2$.
\end{rem}

    \begin{thm}\label{transitive barzanouni}
		If $G$ is a transitive and homeomorphism group of a topological space $X$, where all actions are mutually conjugate, then $GAS(G)$ is complete.
	\end{thm}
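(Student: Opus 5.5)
The plan is to show that every pair of vertices $\phi_1,\phi_2$ of $GAS(G)$ has the same kernel. Then adjacency follows from non-triviality of that common kernel, which holds because the trivial action has been removed from the vertex set.

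\textbf{Step 1: conjugacy transfers stabilizers.} Let $\phi_1,\phi_2$ be two vertices and let $h:X\to X$ be a conjugacy between them, as in Remark~\ref{Barzanouni}. For $g\in G$ and $x\in X$ we have $g\in G_x^{\phi_1}$ if and only if $h(\phi_1(g,x))=h(x)$, that is, if and only if $\phi_2(g,h(x))=h(x)$. Hence
\[
G_x^{\phi_1}=G_{h(x)}^{\phi_2}.
\]
Since $h$ is a bijection of $X$, intersecting over all $x\in X$ gives
\[
\bigcap_{x\in X}G_x^{\phi_1}=\bigcap_{y\in X}G_y^{\phi_2}=:N.
\]
By Remark~\ref{intersection stabilizer}, $N$ is the common kernel of the homomorphisms $G\to S_X$ induced by $\phi_1$ and $\phi_2$, so it is a normal subgroup of $G$.

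\textbf{Step 2: the common kernel is non-trivial.} Every vertex is a non-trivial action, so the definition of $GAS(G)$ implicitly requires $\bigcap_x G_x^{\phi}\neq\{e\}$ for each vertex $\phi$; this is the meaning of the phrase ``non-trivial subgroups $\cap G_x^\phi$'' in the abstract. Under that reading $N\neq\{e\}$, so
\[
\Bigl(\bigcap_{x\in X}G_x^{\phi_1}\Bigr)\cap\Bigl(\bigcap_{x\in X}G_x^{\phi_2}\Bigr)=N\neq\{e\},
\]
and therefore $\phi_1\sim\phi_2$. Since $\phi_1,\phi_2$ were arbitrary, $GAS(G)$ is complete.

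The transitivity hypothesis plays only a supporting role. Transitivity makes all point stabilizers conjugate in $G$, so $N$ is the core of any single stabilizer $G_{x_0}$, which is consistent with Step 1.

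\textbf{Main obstacle.} The difficulty is Step 2, namely justifying $N\neq\{e\}$. For the natural action of a homeomorphism group, the kernel is trivial, and then every vertex would be isolated. So the argument must rest on the convention that vertices carry non-trivial intersections $\cap G_x^\phi$. I would state this convention explicitly and then conclude as above. If that convention is not accepted, I would instead try to find a single non-identity element fixing all of $X$ under one action and transport it to every other action by Step 1; the first part of that, finding such an element, is exactly where the proof could fail.
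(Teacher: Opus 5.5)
Your proof is correct under the convention you state explicitly (vertices are actions with $\cap G_x^\phi\neq\{e\}$, as the abstract indicates). Its first step differs from the paper's, and it is the sound version of that step.

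The paper tries to prove pointwise equality $G_{x_0}^\phi=G_{x_0}^\psi$. From the conjugacy $f$ it correctly obtains $\psi(h,f(x_0))=f(x_0)$. It then replaces $f^{-1}(\psi(h,f(x_0)))$ by $f^{-1}(f(\psi(h,x_0)))$, which swaps $f$ and $\psi(h,\cdot)$ without justification. Pointwise equality is false in general. Take the natural action of $S_3$ on $\{1,2,3\}$ and its twist $\psi(g,x)=(12)g(12)(x)$, which are conjugate via $f=(12)$. The stabilizers of $1$ are $\langle(23)\rangle$ and $\langle(13)\rangle$ respectively. The paper's appeal to transitivity is also vacuous, since $h=e$ already satisfies $\phi(h,x_0)=x_0$.

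Your transport identity $G_x^{\phi_1}=G_{h(x)}^{\phi_2}$ (with $h$ your conjugacy) uses only that $h$ is a bijection. It yields exactly what adjacency depends on, namely equality of the kernels.

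The paper never argues that the common kernel is non-trivial; your Step 2 supplies this. Your remark about the natural action shows more than you draw from it. If the natural, faithful action of the homeomorphism group is among the mutually conjugate actions, Step 1 forces every action of $G$ on $X$ to be faithful. Under your convention $GAS(G)$ then has no vertices and is complete only vacuously; under the other reading every vertex is isolated.

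In particular, your fallback of finding a non-identity element fixing all of $X$ cannot succeed. The convention is therefore genuinely needed, not a technicality.
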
 	
	\begin{proof}
    Assume that $\phi$ and $\psi$ are two conjugate actions of $G$ on $X$. We claim that the stabilizers of any element of $X$ with respect to $\phi$ and $\psi$ are equal.
		Let $x_0$ be an element in $X$.	Since $G$ is transitive, there exists an element $h$ in G such that $\phi(h, x_0) = x_0$. Since the actions $\phi$ and $\psi$ are conjugate, by Remark \ref{Barzanouni} we have a homeomorphism $f:X\to X$ such that $f(\phi(h,x_0))=\psi(h,f(x_0))$, or equivalently, $\phi(h,x_0)=f^{-1}(\psi(h,f(x_0)))$. This implies $\psi(h,f(x_0))=f(x_0)$. Since $f$ is a homeomorphism, we get $f^{-1}\left(\psi(h,f(x_0))\right)=x_0$. This gives $f^{-1}\left(f(\psi(h,x_0))\right)=x_0$. Therefore, $\psi(h,x_0)=x_0$. So, $h$ fixes $x_0$ under action $\psi$ as well.  Therefore, the stabilizers of any element of $X$ corresponding to $\phi$ and $\psi$ are equal. This prove the theorem if all actions of $G$ on $X$ are mutually conjugate.
		
	\end{proof}

We now look into some connectedness properties of group action stabilizer graphs in general abelian groups. We begin with the following remark.
\begin{rem}\label{remark abelian first}
    Let $\phi_1, \phi_2$ be any two vertices of $GAS(G)$. Consider $g_1\in \cap G_x^{\phi_1}$, $g_2\in\cap G_x^{\phi_2}$ such that $g_1\notin \cap G_x^{\phi_2}$ and $g_2\notin\cap G_x^{\phi_1}$. Then we observe that either $g_1g_2\notin (\cap G_x^{\phi_1})\cup(\cap G_x^{\phi_2})$ or $g_1g_2=e$.
    If $G$ is an abelian group, and $\phi_1, \phi_2$ are two non-adjacent vertices of $GAS(G)$, then for any $g_1\in \cap G_x^{\phi_1}$ and $g_2\in\cap G_x^{\phi_2}$, $g_1g_2\notin (\cap G_x^{\phi_1})\cup(\cap G_x^{\phi_2})$.
\end{rem}

We now consider $G$ as an abelian group.
\begin{thm}\label{adjacent vertices}

    Let for any two distinct vertices $\phi_1$ and $\phi_2$ of $GAS(G)$, $(\cap G_x^{\phi_1})\cup(\cap G_x^{\phi_2})=G$. Then $GAS(G)$ is complete.
 \end{thm}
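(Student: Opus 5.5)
Write $N_i=\bigcap_{x\in X}G_x^{\phi_i}$ for $i=1,2$. By the note following the definition, each $N_i$ is a normal subgroup of $G$. Each $N_i$ is also non-trivial: a vertex whose kernel is trivial is isolated in $GAS(G)$, and the trivial action has been removed. The plan is to use the classical fact that a group is never the union of two proper subgroups. Suppose, for contradiction, that $\phi_1\nsim\phi_2$, so that $N_1\cap N_2=\{e\}$.

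First I dispose of the case where one of the subgroups is all of $G$. If $N_1=G$, then $N_1\cap N_2=N_2\neq\{e\}$, so $\phi_1\sim\phi_2$ directly. The same holds if $N_2=G$.

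So assume both $N_1$ and $N_2$ are proper. Then neither contains the other, since otherwise their union would be the larger one, which is proper and so cannot equal $G$. Pick $g_1\in N_1\setminus N_2$ and $g_2\in N_2\setminus N_1$. This is exactly the configuration of Remark \ref{remark abelian first}, and since $G$ is abelian and $\phi_1,\phi_2$ are non-adjacent, that remark gives $g_1g_2\notin N_1\cup N_2$. This contradicts the hypothesis $N_1\cup N_2=G$. A direct check works too: if $g_1g_2\in N_1$ then $g_2\in N_1$, and if $g_1g_2\in N_2$ then $g_1\in N_2$, so abelianness is not really needed here. Hence $N_1\cap N_2\neq\{e\}$, and therefore $\phi_1\sim\phi_2$.

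Since $\phi_1,\phi_2$ were arbitrary distinct vertices, $GAS(G)$ is complete. The main subtlety is the case split: the union-of-two-subgroups argument only applies when both subgroups are proper, so the case where one $N_i$ equals $G$ has to be handled separately, using non-triviality of the $N_i$, which comes from the exclusion of the trivial action. I would state this explicitly.
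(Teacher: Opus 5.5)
Your proof is correct and follows essentially the same route as the paper. You first dispose of the case where one $\cap G_x^{\phi_i}$ equals $G$; the paper calls such a vertex universal, implicitly using non-triviality of the other subgroup, which you state explicitly. For two proper subgroups, both arguments show that a product of an element of one subgroup lying outside the other, with a similar element, lies in neither. Your version is tidier: choosing $g_1\in N_1\setminus N_2$ and $g_2\in N_2\setminus N_1$ up front makes the paper's separate $g_1g_2=e$ case unnecessary and shows that commutativity is never used.
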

 \begin{proof}
     If any of $\cap G_x^{\phi_i}=G$, then $\phi_i$ is a universal vertex. Let us consider two distinct vertices $\phi_1$ and $\phi_2$ of $GAS(G)$ such that both $\cap G_x^{\phi_1}$ and $\cap G_x^{\phi_2}$ are proper subgroups of $G$ and $(\cap G_x^{\phi_1})\cup(\cap G_x^{\phi_2})=G$. We claim that $\phi_1$ and $\phi_2$ are adjacent. On the contrary, assume that $\phi_1$ and $\phi_2$ are not adjacent. Let $g_1\in \cap G_x^{\phi_1}$, $g_2\in\cap G_x^{\phi_2}$ and $g_1g_2\ne e$. Then $g_1g_2$ is either a member of $\cap G_x^{\phi_1}$ or $\cap G_x^{\phi_2}$. Without loss of generality, assume that $g_1g_2\in\cap G_x^{\phi_1}$. Then $\phi_1(g_1g_2,x)=x$. Also, $\phi_1(g_1g_2,x)=\phi_1(g_2,\phi_1(g_1,x))=\phi_1(g_2,x)=x$. This implies that $g_2\in\cap G_x^{\phi_1}$, a contradiction. Therefore, $g_1g_2=e$. This implies that $\phi_2(g_1g_2,x)=x$. Thus $\phi_2(g_1,\phi_2(g_2,x))=\phi_2(g_1,x)=x$.  This implies that $g_1\in(\cap G_x^{\phi_1})\bigcap (\cap G_x^{\phi_2})$, a contradiction to the fact that $\phi_1$ and $\phi_2$ are not adjacent. Also, $\cap G_x^{\phi_1}=\cap G_x^{\phi_2}$ if and only if for any two elements $g_1\in \cap G_x^{\phi_1}$ and $g_2\in \cap G_x^{\phi_2}$, $g_1g_2\in \left(\cap G_x^{\phi_1}\right)\bigcap\left(\cap G_x^{\phi_2}\right)$. This establishes that $\phi_1\sim\phi_2$. This proves the result.
 \end{proof}
    
     

 
 The next theorem establishes a result on $G$, whenever $GAS(G)$ has a path of length three.
 \begin{thm}
    Let $\phi_1, \phi_2$, and $\phi_3$ be any three vertices of $GAS(G)$ and $(\cap G_x^{\phi_1})\cup(\cap G_x^{\phi_2})\cup(\cap G_x^{\phi_3})=G$, where $\cap G_x^{\phi_1}$ and $\cap G_x^{\phi_2}$ are proper subgroups of $G$, and $\phi_1-\phi_3-\phi_2$ is a path of length 3. Then for any two distinct elements $g_1$ and $g_2$ of $G$ such that $g_1\in\cap G_x^{\phi_1}$ and $g_2\in\cap G_x^{\phi_2}$, $g_1\notin \cap G_x^{\phi_3}$ if and only if $g_2\notin \cap G_x^{\phi_3}$.
 \end{thm}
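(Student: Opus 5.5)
Write $N_i=\bigcap_{x\in X}G_x^{\phi_i}$ for $i=1,2,3$. As noted after the definition of $GAS(G)$, each $N_i$ is a normal subgroup of $G$, so membership questions reduce to subgroup arithmetic. The hypotheses say three things: $G=N_1\cup N_2\cup N_3$ with $N_1,N_2$ proper; $N_1\cap N_3\neq\{e\}$ and $N_3\cap N_2\neq\{e\}$ (the path $\phi_1-\phi_3-\phi_2$); and $g_1\in N_1$, $g_2\in N_2$ with $g_1\neq g_2$. By the symmetry $1\leftrightarrow 2$ of all hypotheses, it suffices to prove one direction. So I assume $g_1\notin N_3$ and aim to show $g_2\notin N_3$, arguing by contradiction from $g_2\in N_3$.

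The key move, as in the proof of Theorem \ref{adjacent vertices} and Remark \ref{remark abelian first}, is to look at the product $g_1g_2$. Since $G=N_1\cup N_2\cup N_3$, this product lies in some $N_i$. I would split into three cases and use that each $N_i$ is a subgroup.
\begin{itemize}
\item If $g_1g_2\in N_3$, then with $g_2\in N_3$ we get $g_1=(g_1g_2)g_2^{-1}\in N_3$, contradicting $g_1\notin N_3$.
\item If $g_1g_2\in N_1$, then $g_2=g_1^{-1}(g_1g_2)\in N_1$, so $g_2\in N_1\cap N_2\cap N_3$.
\item If $g_1g_2\in N_2$, then $g_1\in N_2$, so $g_1\in N_1\cap N_2$.
\end{itemize}
The first case is an immediate contradiction. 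The other two must be ruled out using the remaining hypotheses.

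This is where I expect the main obstacle, since nothing yet forbids $g_2\in N_1$ or $g_1\in N_2$. My first attempt would be to exploit $g_1\neq g_2$ and the path structure. The path hypothesis presumably means $\phi_1\nsim\phi_2$ (an induced path), so $N_1\cap N_2=\{e\}$. Then the second case forces $g_2=e$ and the third forces $g_1=e$.
\begin{itemize}
\item If $g_2=e$, then $g_2\in N_3$ trivially, so this degenerate choice must be excluded. I would read the statement as concerning nonidentity elements, or treat $e$ separately.
\item If $g_1=e$, then $g_1\in N_3$, contradicting the assumption $g_1\notin N_3$.
\end{itemize}
Thus, under the reading $N_1\cap N_2=\{e\}$ with $g_1,g_2\neq e$, every case yields a contradiction, and the symmetric argument gives the converse.

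If the intended reading does not include $\phi_1\nsim\phi_2$, I would instead replace $g_1$ by $g_1h$ for a suitable $h\in N_1\cap N_3$, nontrivial by adjacency. The aim is to move the product out of $N_1\cup N_2$ and force it into $N_3$, as in Remark \ref{remark abelian first}. I expect the bookkeeping there to be the delicate part. I would also first test the statement on a small elementary abelian $2$-group, to make sure no further hypothesis is needed.
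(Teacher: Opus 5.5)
Your argument is correct under the induced-path reading $N_1\cap N_2=\{e\}$, and it is essentially the paper's proof. The paper applies Remark \ref{remark abelian first} to the non-adjacent pair $\phi_1,\phi_2$ to get $g_1g_2\notin N_1\cup N_2$, hence $g_1g_2\in N_3$, and then cancels $g_2$, tacitly assuming $g_1,g_2\neq e$ as you do. That reading is the right one: if $\phi_1\sim\phi_2$ is allowed, the claim fails in $\mathbb{Z}_2^3=\langle a,b,c\rangle$ with $N_1=\langle a,c\rangle$, $N_2=\langle b,c\rangle$, $N_3=\langle ab,c\rangle$, $g_1=a$, $g_2=c$, so your fallback cannot succeed. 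The case $g_2=e$ that you set aside also holds, because your case analysis applied to $g_1h$ with $h\in N_2\cap N_3$ nontrivial forces $g_1h\in N_3$ and hence $g_1\in N_3$.
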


 \begin{proof}
     Clearly, $\cap G_x^{\phi_1}\ne\cap G_x^{\phi_2}$. Therefore by Remark \ref{remark abelian first}, $g_1g_2\ne e$ and $g_1g_2\notin(\cap G_x^{\phi_1})\cup(\cap G_x^{\phi_2})$. This implies $g_1g_2\in \cap G_x^{\phi_3}$. Let $g_1\notin \cap G_x^{\phi_3}$ and $g_2\in \cap G_x^{\phi_3}$. Then $\phi_3(g_1g_2,x)=x$ implies $\phi_3(g_1, \phi_3(g_2,x))=\phi_3(g_1,x)=x$, which is a contradiction. Hence, $g_2\notin \cap G_x^{\phi_3}$. The converse is obtained by replacing $g_1$ by $g_2$ in the above explanations.
 \end{proof}
 
 We check the possibility of $GAS(G)$ having an induced path of length at most four in Theorem \ref{nxt thm}.
 \begin{thm}\label{nxt thm}
Suppose $GAS(G)$ contains $4$ vertices $\phi_1,\phi_2,\phi_3$ and $\phi_4$ such that $\cup_{i=1}^4(\cap G_x^{\phi_i})=G$. Then the number of possibilities of $GAS(G)$ having an induced path of length at most 4 made by the vertices $\phi_1,\phi_2,\phi_3$ and $\phi_4$ is 18.
 \end{thm}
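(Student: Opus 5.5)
The plan is to reduce the statement to a finite enumeration on the four labelled vertices $\phi_1,\phi_2,\phi_3,\phi_4$, and then use the covering hypothesis $\cup_{i=1}^4(\cap G_x^{\phi_i})=G$ to decide which configurations can actually occur in $GAS(G)$. Throughout I write $N_i=\cap G_x^{\phi_i}$; each $N_i$ is a non-trivial normal subgroup of $G$. Adjacency is then simply $\phi_i\sim\phi_j \iff N_i\cap N_j\neq\{e\}$. So the question is purely about which intersection patterns of four non-trivial normal subgroups covering $G$ produce an induced path. As in the preceding results, I would work with $G$ abelian so that Remark \ref{remark abelian first} is available.

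\textbf{Step 1 (the combinatorial count).} First list the induced paths that can be formed from the four labelled vertices. A path on all four vertices of length $3$ is determined by an ordering of $\{\phi_1,\dots,\phi_4\}$ up to reversal, which gives $4!/2=12$ possibilities. A path on three of the vertices of length $2$ is determined by choosing the omitted vertex and then the middle vertex, which gives $4\cdot 3=12$ candidates. A path of length $1$ is just a chosen pair, giving $6$ candidates. No induced path on four vertices can have length $4$, so ``at most 4'' only involves lengths $1,2,3$.

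\textbf{Step 2 (which candidates survive).} Next I would test each candidate type against the covering hypothesis, using the argument of Theorem \ref{adjacent vertices} and of the theorem on paths of length three. Take an induced path $\phi_a-\phi_b-\phi_c$ in which $\phi_a\nsim\phi_c$. For $g_a\in N_a$ and $g_c\in N_c$, Remark \ref{remark abelian first} shows that $g_ag_c\notin N_a\cup N_c$. By the covering condition, $g_ag_c$ must therefore lie in one of the remaining subgroups. The preceding theorem then forces $g_a$ and $g_c$ to behave alike with respect to that subgroup. Running this through the cases, I would aim to show the following:
\begin{itemize}
\item all $12$ Hamiltonian orderings are consistent, with the fourth vertex carrying the products $g_ag_c$;
\item the $6$ length-one configurations (a single edge, the other two vertices playing the role of covering vertices) are consistent;
\item the length-two configurations are excluded, or absorbed, because the covering forces the fourth vertex to be adjacent in a way that destroys inducedness.
\end{itemize}
This would give $12+6=18$.

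\textbf{Main obstacle.} The difficult step is Step 2, specifically deciding the fate of the $12$ three-vertex configurations. I would first try to prove that the product $g_ag_c$ must lie in $N_b$ or in $N_d$ (where $\phi_d$ is the fourth vertex), and that in either case $N_d$ meets $N_a$ or $N_c$ non-trivially. That would force an extra edge and so prevent the three-vertex path from being induced with $\phi_d$ isolated.

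If this does not go through, the fallback is different. I would read ``possibilities'' as counting vertex-labelled paths that can be \emph{completed} within the four-vertex configuration, and check the total of $18$ directly against small examples. A natural test case is $G=\mathbb{Z}_{p}\times\mathbb{Z}_{p}$ (for a prime $p$), since its non-trivial normal subgroups of order $p$ are the lines of $\mathbb{F}_p^2$, and $p+1\ge 4$ of them cover $G$. Varying the choice of $N_i$ among these lines and $G$ itself would realise each surviving configuration.
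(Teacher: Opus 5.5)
Your final number $18$ is right, but Step 2 classifies the configurations backwards, so the argument cannot be completed as planned. Write $N_i=\cap G_x^{\phi_i}$ as you do. Under the covering hypothesis, the twelve Hamiltonian induced paths are exactly the configurations that \emph{cannot} occur, and this is what the paper proves.

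Suppose $\phi_a-\phi_b-\phi_c-\phi_d$ is an induced path, so $N_a\cap N_c=N_a\cap N_d=N_b\cap N_d=\{e\}$. Pick $e\neq g_1\in N_a\cap N_b$ and $e\neq g_2\in N_c\cap N_d$. Then $g_1\notin N_c\cup N_d$ and $g_2\notin N_a\cup N_b$. By covering, $g_1g_2\in N_i$ for some $i$. Each $N_i$ contains one of $g_1,g_2$, so, being a subgroup, it would contain the other, a contradiction. No abelian hypothesis is needed.

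So your claim that ``all 12 Hamiltonian orderings are consistent'' is false. The product trick you intended to apply to $g_ag_c$ for a three-vertex path has to be applied instead to elements taken from the two end edges of a four-vertex path, where it rules the path out.

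Conversely, the three-vertex induced paths are not excluded, and your proposed mechanism for excluding them does not work. You argued that $\phi_d$ is forced to be adjacent to $\phi_a$ or $\phi_c$, ``destroying inducedness''. But whether $\phi_a-\phi_b-\phi_c$ is induced depends only on the subgraph on $\{\phi_a,\phi_b,\phi_c\}$; the edges at $\phi_d$ are irrelevant.

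Concretely, take $G=\mathbb{Z}_2\times\mathbb{Z}_4=\langle a\rangle\times\langle b\rangle$ with $N_1=\langle b\rangle$, $N_2=\langle ab\rangle$, $N_3=\langle a,b^2\rangle$, $N_4=\langle a\rangle$. Each is the kernel of the action on $G/N_i$, extended trivially to a large enough $X$. These subgroups cover $G$. The graph is a triangle on $\phi_1,\phi_2,\phi_3$ with a pendant edge $\phi_3\phi_4$, and $\phi_1-\phi_3-\phi_4$ is an induced three-vertex path. Relabelling realises each of the $4\cdot 3=12$ choices of omitted vertex and middle vertex.

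The correct split, as in the paper, is therefore $0$ four-vertex paths, $12$ three-vertex paths and $6$ two-vertex paths. Note also that the paper uses ``length'' for the number of vertices, so ``length at most 4'' means paths on at most four vertices, not your edge count.
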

 \begin{proof} 
    Suppose there is an induced path of length 4 made by the vertices $\phi_1,\phi_2,\phi_3$ and $\phi_4$. Let one of $\cap G_x^{\phi_i}$, $1\le i\le4$, be $G$. Then $\phi_i$ is adjacent to all other vertices, which is not possible. Therefore, $\cap G_x^{\phi_i}$ is a proper subgroup of $G$ for $1\le i\le4$. Without loss of generality, consider that $GAS(G)$ is the path $\phi_1-\phi_3-\phi_4-\phi_2$ such that $g_1\in(\cap G_x^{\phi_1})\bigcap (\cap G_x^{\phi_3})$ and $g_2\in(\cap G_x^{\phi_2})\bigcap(\cap G_x^{\phi_4})$. Then $g_1\notin\cap G_x^{\phi_4}$ and $g_2\notin\cap G_x^{\phi_3}$. As $\phi_1\nsim \phi_2$, $g_1g_2(\ne e)\notin(\cap G_x^{\phi_1})\cup(\cap G_x^{\phi_2})$ by Remark \ref{remark abelian first}. Then $g_1g_2$ is either in $\cap G_x^{\phi_3}$ or in $\cap G_x^{\phi_4}$. Without loss of generality, assume that $g_1g_2\in \cap G_x^{\phi_4}$. Then we see that $\phi_4(g_1,x)=x$, a contradiction. In the same way, we can show for every combination of $\phi_1,\phi_2,\phi_3$ and $\phi_4$ that $GAS(G)$ does not have any induced path of length 4 containing the vertices $\phi_1,\phi_2,\phi_3$ and $\phi_4$. To check the number of induced paths of length 3, let us consider that one of $\cap G_x^{\phi_i}$, $1\le i\le4$, is $G$, say $\cap G_x^{\phi_4}$. Then the induced path consists of only three vertices other than $\phi_4$, which has only three possibilities. If all $\cap G_x^{\phi_i}(1\le i\le4)$ are proper subgroups of $G$, then the possibility of having an induced path of length three is twelve, out of which three are previously counted. Therefore, the total number of possibilities of having an induced path of length three is twelve. By counting the paths of length two, we can conclude that $GAS(G)$ has eighteen possibilities of having an induced path of length at most 4, which includes the vertices $\phi_1,\phi_2,\phi_3$ and $\phi_4$.
 \end{proof}

In Theorem \ref{4-cycle free}, we get a sufficient condition for which $GAS(G)$ does not contain any 4-cycle as well as any 5-cycle.
 \begin{thm}\label{4-cycle free}
If $GAS(G)$ has an induced cycle made by the vertices $\phi_1,\phi_2,\dots,\phi_k$ such that $\cup_{i=1}^k(\cap G_x^{\phi_i})=G$, where $\cap G_x^{\phi_1},\cap G_x^{\phi_2},\dots,\cap G_x^{\phi_k}$ are proper subgroups of $G$, then $GAS(G)$ is $k$-cycle free for $k=4$ and $5$.
 \end{thm}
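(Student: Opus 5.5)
Write $N_i=\bigcap_{x\in X}G_x^{\phi_i}$. Each $N_i$ is a normal subgroup of $G$, and by hypothesis each is proper. The plan is to argue by contradiction, using the same mechanism as in Theorem \ref{nxt thm}. We work in the abelian setting of this section, so Remark \ref{remark abelian first} is available.

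Suppose the $\phi_i$ form an induced $k$-cycle $\phi_1\sim\phi_2\sim\dots\sim\phi_k\sim\phi_1$ with $\bigcup_{i=1}^k N_i=G$. The key lemma is the closure step used repeatedly above. If $g\in N_j$ and $gh\in N_j$, then $h\in N_j$, since $\phi_j(gh,x)=\phi_j(h,\phi_j(g,x))$.

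For $k=4$, I pick elements on two opposite edges: $g_1\in (N_1\cap N_2)\setminus\{e\}$ and $g_3\in(N_3\cap N_4)\setminus\{e\}$. Inducedness gives $N_1\cap N_3=\{e\}$ and $N_2\cap N_4=\{e\}$. From this, $g_1\notin N_3\cup N_4$ and $g_3\notin N_1\cup N_2$. For example, $g_1\in N_3$ would put $g_1$ in $N_1\cap N_3$, forcing $g_1=e$.

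Now consider $g_1g_3$. It is not $e$, since otherwise $g_1=g_3^{-1}\in N_3$. So $g_1g_3$ lies in some $N_j$, because the union covers $G$.
\begin{itemize}
\item If $j\in\{1,2\}$, the closure lemma applied with $g_1\in N_j$ gives $g_3\in N_j$, a contradiction.
\item If $j\in\{3,4\}$, the same lemma applied with $g_3\in N_j$ gives $g_1\in N_j$, a contradiction.
\end{itemize}
This rules out induced $4$-cycles.

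For $k=5$, I take $g_1\in N_1\cap N_2$ and $g_3\in N_3\cap N_4$, both nontrivial, and look at $g_1g_3$ with the same argument. It cannot lie in $N_1,N_2,N_3,N_4$, by the closure lemma together with the non-adjacencies $N_1\cap N_3=N_1\cap N_4=N_2\cap N_4=\{e\}$. The one remaining adjacency among these indices, $N_2\cap N_3\neq\{e\}$, is a problem: $g_1$ could lie in $N_3$ via $N_2\cap N_3$. So I first refine the choice of elements. I take $g_1\in (N_1\cap N_2)\setminus N_3$, which works because $g_1\in N_1$ and $g_1\ne e$ already force $g_1\notin N_3$. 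Symmetrically, $g_3\in N_4$ forces $g_3\notin N_2$.

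That leaves the case $g_1g_3\in N_5$. I then rotate the choice of edges around the cycle, using the pairs $(N_1\cap N_2, N_3\cap N_4)$ and $(N_2\cap N_3, N_4\cap N_5)$, and so on. Each product is forced into the single vertex not touched by the chosen pair. From this I derive membership constraints that combine to contradict inducedness, for instance by showing that some product lies in both $N_5$ and $N_1$ in a way that puts a nontrivial element in $N_5\cap N_2$ or $N_5\cap N_3$.

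The main obstacle is exactly this $k=5$ case, where the fifth subgroup absorbs the product. I would try first to play the two products $g_1g_3\in N_5$ and $g_2g_4\in N_1$ (with $g_2\in N_2\cap N_3$ and $g_4\in N_4\cap N_5$) against each other using commutativity. If that fails, I would fall back to an order/coset counting argument. That argument would use that a group is never a union of two proper subgroups, and would restrict to the subgroup generated by the chosen elements.
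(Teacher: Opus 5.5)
Your $k=4$ argument is correct, and it is shorter than the paper's. You take a single product $g_1g_3$ of nontrivial elements from the opposite edges $N_1\cap N_2$ and $N_3\cap N_4$, show it has nowhere to land, and need only the covering hypothesis and subgroup closure. The paper instead works through $g_1g_3$, $g_2g_4$ and $g_1g_2g_3g_4$.

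The genuine gap is the case $k=5$. For $a_i$ a fixed nontrivial element of $N_i\cap N_{i+1}$ (indices mod $5$), you correctly show $a_1a_3\in N_5$, and your rotation claim, that each product $a_ia_{i+2}$ is forced into $N_{i-1}$, is right. But you never derive a contradiction, and the routes you suggest do not produce one.

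\begin{itemize}
\item A product lying in both $N_5$ and $N_1$ is harmless, since $\phi_5\sim\phi_1$ allows $N_5\cap N_1\neq\{e\}$. The contradiction has to come from an element of a \emph{non-adjacent} pair.
\item The two memberships $a_1a_3\in N_5$ and $a_2a_4\in N_1$ are not contradictory on their own. In $\mathbb{F}_2^5$ with basis $a_1,\dots,a_5$, take $N_1=\langle a_5,a_1,a_2+a_4\rangle$, $N_2=\langle a_1,a_2\rangle$, $N_3=\langle a_2,a_3\rangle$, $N_4=\langle a_3,a_4\rangle$, $N_5=\langle a_4,a_5,a_1+a_3\rangle$. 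These have exactly the induced $5$-cycle intersection pattern, while $a_1+a_3\in N_5$ and $a_2+a_4\in N_1$.
\item The coset-counting fallback is not an argument.
\end{itemize}

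The missing step is to combine two rotated products that share a factor and land in non-adjacent subgroups. Take $a_2a_4\in N_1$ and $a_4a_1\in N_3$; both are instances of your forcing argument. Then $a_2a_4a_1=(a_2a_4)a_1\in N_1$ and $a_2a_4a_1=a_2(a_4a_1)\in N_3$. Since $N_1\cap N_3=\{e\}$, this gives $a_2a_4a_1=e$. Hence $a_4=a_2^{-1}a_1^{-1}$ lies in $N_2$, so $a_4\in N_2\cap N_4=\{e\}$, contradicting $a_4\neq e$.

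Completed this way, your route is shorter than the paper's and does not even need commutativity. The paper instead places each consecutive triple $g_ig_{i+1}g_{i+2}$ in $N_{i+4}$ and deduces $g_1g_2g_3g_4g_5=e$. It then uses $g_2g_5\in N_4$ to force $g_1\in N_4$.
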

 \begin{proof}
	We first take $k=4$. On the contrary, assume that $\phi_1-\phi_2-\phi_3-\phi_4-\phi_1$ is a 4-cycle in $GAS(G)$. Let $g_i\in\cap G_x^{\phi_i}$, where, $i=1,2,3,$ and 4 such that $g_1g_3\ne g_2^{-1}$ and $g_2g_4\ne g_1^{-1}$. Then $g_1\notin  \cap G_x^{\phi_3}$, $g_2\notin\cap G_x^{\phi_4}$, $g_3\notin\cap G_x^{\phi_1}$ and $g_4\notin \cap G_x^{\phi_2}$. Now, $g_1g_3$ is either in $\cap G_x^{\phi_2}$ or in $\cap G_x^{\phi_4}$, but not in both. Similarly, $g_2g_4$ is either in $\cap G_x^{\phi_1}$ or in $\cap G_x^{\phi_3}$, but not both. Without loss of generality, assume that $g_1g_3\in \cap G_x^{\phi_2}$ and $g_2g_4\in \cap G_x^{\phi_1}$. Then $g_1g_2g_3\in \cap G_x^{\phi_2}$. Now consider the element $g_1g_2g_3g_4$. Let $g_1g_2g_3g_4= e$. This implies $g_1g_3=(g_2g_4)^{-1}$. Then $g_1g_3$ and $g_2g_4$ are in $\cap G_x^{\phi_i}$ for at least one $i$, say $i=1$. We get $\phi_1(g_3,x)=\phi_1(g_3,\phi_1(g_1,x))=\phi_1(g_1g_3,x)=x$, a contradiction. Therefore, $g_1g_2g_3g_4\ne e$. Then $g_1g_2g_3g_4$ is either in $\cap G_x^{\phi_1}$ or in $\cap G_x^{\phi_3}$. If $g_1g_2g_3g_4\in \cap G_x^{\phi_1}$, then $\phi_1(g_3,x)=\phi_1(g_3,\phi_1(g_2g_4,x))=\phi_1(g_2g_3g_4,x)=x$, a contradiction. Thus, $g_1g_2g_3g_4\in \cap G_x^{\phi_3}$. Then $g_1g_2g_4\in \cap G_x^{\phi_3}$, which is again a contradiction as $g_1g_2g_4\in \cap G_x^{\phi_1}$. This concludes that there is no 4-cycle $GAS(G)$.
	
	For $k=5$, let us assume a $\phi_1-\phi_2-\phi_3-\phi_4-\phi_5-\phi_1$  5-cycle in $GAS(G)$. Let $g_i\in(\cap G_x^{\phi_i})\bigcap(\cap G_x^{\phi_{i+1}}), i=1,2,3,4$ and $g_5\in(\cap G_x^{\phi_5})\cap(\cap G_x^{\phi_{1}})$. Clearly, $g_ig_{i+1}\ne e$ for $i=1,2,3,4$ and $g_1g_5\ne e$. Then $g_ig_{i+1}\in \cap G_x^{\phi_{i+1}}$ for $i=1,2,3,4$ and $g_5g_1\in\cap G_x^{\phi_1}$ only. Consider the element $g_1g_2g_3$. If $g_1g_2g_3=e$, then $g_1g_2=g_3^{-1}$, which makes $g_3\in\cap G_x^{\phi_{2}}$, a contradiction. Therefore, $g_1g_2g_3\ne e$. Clearly $g_1g_2g_3\notin (\cap G_x^{\phi_1})\bigcup(\cap G_x^{\phi_2})\bigcup(\cap G_x^{\phi_{4}})$. If $g_1g_2g_3\in \cap G_x^{\phi_3}$, then $\phi_3(g_1g_2,x)=\phi_3(g_1g_2,\phi_3(g_3,x))=\phi_3(g_1g_2g_3,x)=x$, a contradiction. Therefore, $g_1g_2g_3\in \cap G_x^{\phi_5}$. Similarly, $g_2g_3g_4\in\cap G_x^{\phi_1}, g_3g_4g_5\in\cap G_x^{\phi_2}, g_1g_4g_5\in\cap G_x^{\phi_3}$ and $g_2g_3g_4\in\cap G_x^{\phi_4}$. This gives $g_1g_2g_3g_4g_5\in\cap_{i=1}^5(\cap G_x^{\phi_i})$, which implies that $g_1g_2g_3g_4g_5=e$. Now, consider the element $g_2g_5$. If $g_2g_5=e$, then $g_2\in \cap G_x^{\phi_5}$, a contradiction. Therefore, $g_2g_5\ne e$. Then clearly, $g_2g_5\in \cap G_x^{\phi_4}$. Also, take the element $(g_3g_4)^{-1}=g_1g_2g_5(\ne e)\in \cap G_x^{\phi_4}$. Then $\phi_4(g_1,x)=\phi_4(g_2,\phi_4(g_2g_5,x))=\phi_4(g_1g_2g_5,x)=x$. This gives that $g_1\in \cap G_x^{\phi_4}$, a contradiction. Hence, there is no 5-cycle in $GAS(G)$.
	
\end{proof}

  We now find a finiteness condition of the number of group actions of a finite group $G$ on $X$ in Theorem \ref{finite group action}. 
 \begin{thm}\label{finite group action}
     Let $G=\bigcup\limits_{i\in \Lambda}(\cap G_{x}^{\phi_i})$, where $\Lambda$ is an index set, $o(G)=n+1$ and $GAS(G)$ be a totally disconnected graph. Then
	the number of group actions of $G$ on any arbitrary set $X$ is at most $n$.
 \end{thm}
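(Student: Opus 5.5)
The plan is to count vertices of $GAS(G)$ by assigning to each action a nonidentity element of $G$, injectively. Write $K_i=\bigcap_{x\in X} G_x^{\phi_i}$ for $i\in\Lambda$. By Remark \ref{intersection stabilizer} each $K_i$ is the kernel of the homomorphism $G\to S_X$ induced by $\phi_i$ (Theorem \ref{Action homorphism}), so it is a normal subgroup of $G$. I read the statement as counting the actions that occur as vertices of $GAS(G)$, so the trivial action is excluded, as in the definition of $GAS(G)$. Since $GAS(G)$ is totally disconnected, for distinct $i,j\in\Lambda$ we have $K_i\cap K_j=\{e\}$.

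\textbf{Step 1.} Every $K_i$ is nontrivial. If some $K_i=\{e\}$, that vertex contributes nothing to the covering $G=\bigcup_{i\in\Lambda}K_i$. I would therefore first restrict attention to the actions with $K_i\neq\{e\}$. These are exactly the actions used in the covering hypothesis. The vertex set should be understood this way; otherwise faithful actions on a large $X$ can be arbitrarily numerous, and the bound fails for them.

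\textbf{Step 2.} For each $i$ choose $g_i\in K_i\setminus\{e\}$. The map $i\mapsto g_i$ is injective: if $g_i=g_j$ with $i\neq j$, then $g_i\in K_i\cap K_j=\{e\}$, which is a contradiction. Hence $|\Lambda|\le |G\setminus\{e\}| = o(G)-1 = n$. Equivalently, the sets $K_i\setminus\{e\}$ are pairwise disjoint, nonempty, and together cover $G\setminus\{e\}$, by the hypothesis $G=\bigcup_{i\in\Lambda}K_i$. So $\sum_i (|K_i|-1)=n$, which again forces $|\Lambda|\le n$.

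\textbf{Step 3.} Pass from $\Lambda$ to the actual number of actions. This is the main obstacle. Distinct actions could share the same kernel $K_i$, but then they would be adjacent because $K_i\neq\{e\}$, contradicting total disconnectedness. So the indexing $i\mapsto\phi_i$ is faithful to kernels, and each nontrivial, non-faithful action appears once in $\Lambda$. The number of such actions of $G$ on $X$ is therefore at most $n$, independent of $X$.

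I expect the delicate point to be justifying that $\Lambda$ indexes all such actions and that repeated kernels cannot occur. I will handle both through the adjacency definition, as in Step 3, and state explicitly that actions with trivial kernel are outside the count.
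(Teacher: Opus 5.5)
Your Steps 2 and 3 are correct, and they are a cleaner form of what the paper's proof actually relies on. The paper picks non-identity $g_1\in\cap G_x^{\phi_1}$ and $g_2\in\cap G_x^{\phi_2}$, shows $g_1g_2\neq e$ and $g_1g_2\notin(\cap G_x^{\phi_1})\cup(\cap G_x^{\phi_2})$ via Remark \ref{remark abelian first}, assigns $g_1g_2$ to a third action, and ``continues until $i=n$''. The only part of that process that yields an upper bound is the pigeonhole you state directly: kernels of distinct vertices meet in $\{e\}$, so choosing one non-identity element in each kernel is injective. Your version needs no product construction and no commutativity, and the partition identity $\sum_i(|K_i|-1)=n$ is a nice by-product.

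\textbf{The gap is Step 1.} The theorem counts all vertices of $GAS(G)$, and faithful actions are vertices, since only the trivial action is removed. You exclude them by fiat and claim the bound fails for them. Under the stated hypotheses that claim is false, because faithful vertices cannot occur. Proving this is exactly where the covering hypothesis is needed; your injection in Step 2 never uses it.

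\emph{Why no vertex is faithful.} Take $G\neq\{e\}$, since otherwise there are no vertices. The covering gives a vertex $\phi$ with $K=\bigcap_{x}G_x^{\phi}\neq\{e\}$, and the induced homomorphism $\rho\colon G\to S_X$ has nontrivial image.
\begin{itemize}
\item Suppose $|X|\ge 3$. Then $Z(S_X)=1$, so some $\tau\in S_X$ does not centralize $\rho(G)$. The map $g\mapsto\tau\rho(g)\tau^{-1}$ induces an action $\psi\neq\phi$ with the same kernel $K$. Hence $\phi\sim\psi$, contradicting total disconnectedness.
\item So $|X|=2$. Every nontrivial action then comes from a homomorphism onto $S_2$, whose kernel has index $2$. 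That kernel is trivial only if $G\cong\mathbb{Z}_2$. In that case every vertex is faithful, which contradicts the existence of $\phi$.
\end{itemize}
Therefore, under the hypotheses, every vertex has nontrivial kernel, and your Step 2 bounds all of them.

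Your scenario of many faithful actions on a large $X$ arises only if the trivial action may be placed in $\Lambda$, which you yourself ruled out. The paper's proof makes the same silent assumption when it chooses a non-identity element in every $\cap G_x^{\phi_i}$. The point is that this is a consequence of the hypotheses that has to be proved, not a convention about the vertex set.
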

 \begin{proof}
     Let $g_1,\dots,g_n$ be the non-identity elements of $G$ and $\phi_1$ and $\phi_2$ be two distinct vertices of $GAS(G)$. Consider that $g_1\in \cap G_{x}^{\phi_1}$ and $g_{2}\in \cap G_{x}^{\phi_{2}}$. Then $g_1,g_{2}\notin(\cap G_{x}^{\phi_1})\cap(\cap G_{x}^{\phi_{2}})$. If $g_1g_2=e$, then $x=\phi_1(g_1g_2,x)=\phi_1(g_1,\phi_1(g_2,x))=\phi_1(g_2,x)\implies g_2\in\cap G_{x}^{\phi_1}$. Similarly, $g_1\in \cap G_{x}^{\phi_2}$. $g_1,g_{2}\in(\cap G_{x}^{\phi_1})\cap(\cap G_{x}^{\phi_{2}})$, a contradiction. Therefore we have $g_1g_2\ne e$. As $\phi_1\nsim \phi_2$, by Remark \ref{remark abelian first}, $g_1g_{2}\notin(\cap G_{x}^{\phi_1})\bigcup(\cap G_{x}^{\phi_{2}})$. Therefore, $g_1g_2=g_i$, for some $i=3,4,\dots, n$. Without loss of generality, we take, $i=3$. As $G=\bigcup\limits_{j\in \Lambda}(\cap G_{x}^{\phi_j})$ and $GAS(G)$ is totally disconnected, $g_3\in\cap G_{x}^{\phi_j}$ for exactly one $j>2$, say for $j=3$. Thus we find another action $\phi_3$ apart from $\phi_1$ and $\phi_2$ such that $g_3\in \cap G_{x}^{\phi_3}$. Similarly, we can find distinct actions $\phi_k$ for $k>3$ such that $g_k\in \cap G_{x}^{\phi_k}$. As there are only finite numbers of $g_i$s, the finding process of $\phi_i$s should also be stopped if we get $i=n$. This concludes that the number of distinct group actions of $G$ on any set $X$ is at most $n$.
 \end{proof}
\section{Some characteristics of the subgraph $gas(G)$ of $GAS(G)$}
We now consider a special subgraph $gas(G)$ of $GAS(G)$, where the vertex set of $gas(G)$ contains those actions $\phi$ for which the sets $\cap G_x^{\phi}$ are distinct. That means if in $GAS(G)$,  $\phi_1$ and $\phi_2$ are two distinct vertices such that $\cap G_x^{\phi_1}=\cap G_x^{\phi_2}$, then we take only one action in the vertex set of $gas(G)$. An example of $gas(D_4)$ is shown below.
    \begin{eg} We take the vertices $\phi_1, \phi_2, \phi_3, \phi_4$ corresponding to the non-identity normal subgroups $\langle r\rangle, \langle r^2\rangle, \langle r^2,s\rangle, \langle r^2,rs\rangle$ of $D_4=\left\langle r,s|r^4=s^2=e, rs=sr^{-1}\right\rangle$. We draw $gas(D_4)$ in Figure \ref{gas(D_4)}.
    \begin{figure}[h]
        \centering
        \includegraphics[width=11cm]{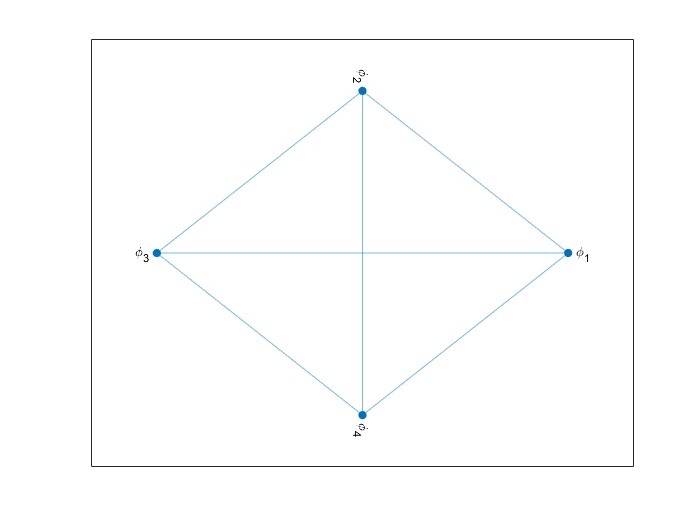}
        \caption{$gas(D_4)$}
        \label{gas(D_4)}
    \end{figure}
    \end{eg}

    \begin{rem}\label{normal subgroup}
        Every vertex in $gas(G)$ leads to a complete connected component of group action-stabilizer graph of $G$. Moreover, every normal subgroup of $G$ leads to a vertex of $gas(G)$.
    \end{rem}
    
    We now check the significances of $gas(G)$, whenever $G$ is abelian. By Remark \ref{normal subgroup}, every subgroup of $G$ leads to a vertex of $gas(G)$ and any two distinct vertices are adjacent if and only if the intersection of the corresponding subgroups is a non-identity subgroup of $G$. Thus, $gas(G)=\Gamma(G)$.
    Using Theorem 2\cite{Akbari} and Theorem 3\cite{Akbari}, we can immediately conclude that $gas(G)$ is a complete graph if and only if $G$ is isomorphic to a subgroup of $\mathbb{Q}$ or $\mathbb{Z}(p^\infty)$, for some prime number $p$. Moreover, $gas(G)$ is a 3-cycle-free graph if and only if $G$ is isomorphic to either $\mathbb{Z}_{p^2}$ or $\mathbb{Z}_{p^3}$ or $\mathbb{Z}_{p}\times \mathbb{Z}_q$, for some prime numbers $p$ and $q$.
    
    We have discussed earlier the behaviour of $GAS(D_n)$, when $n$ is an even number, as well as $n$ is a prime number. 
        If $n$ is an odd composite number, then by Lemma \ref{conrad}, the normal subgroups of $D_n$ are the subgroups of the cyclic group of order $n$. Also, in $gas(D_n)$, two vertices are adjacent if and only if the intersection of the corresponding normal subgroups is a group other than the identity subgroup and the vertices are the normal subgroups of $D_n$, that is, the subgroups of $\mathbb{Z}_n$. Hence, $gas(D_n)=\Gamma(\mathbb{Z}_n)$.

    We now discuss significance of $gas(G)$ when $G$ is a finite solvable group.
    \begin{thm}\label{finite solvable n-cycle}
    If $GAS(G)$ has a cycle, then $girth(GAS(G))=3$.
    \end{thm}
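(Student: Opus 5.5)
Write $N_\phi=\bigcap_{x\in X}G_x^{\phi}$ for a vertex $\phi$. By the remark following the definition of $GAS(G)$, $N_\phi$ is a normal subgroup of $G$. Adjacency $\phi\sim\psi$ means $N_\phi\cap N_\psi\ne\{e\}$. The plan is to start from an arbitrary cycle and produce a triangle.

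Suppose $GAS(G)$ has a cycle, and fix an induced cycle of minimal length $k\ge 3$ through consecutive vertices $\phi_1,\phi_2,\dots,\phi_k$; such a cycle exists because a shortest cycle is induced. If $k=3$ there is nothing to prove, so assume $k\ge 4$ and aim for a contradiction.

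\textbf{Step 1: a non-trivial intersection along the cycle.} Consider the three consecutive vertices $\phi_1,\phi_2,\phi_3$. We have $N_{\phi_1}\cap N_{\phi_2}\ne\{e\}$ and $N_{\phi_2}\cap N_{\phi_3}\ne\{e\}$. Both are normal subgroups of $G$ contained in the normal subgroup $N_{\phi_2}$. I would like to show that
\[
N_{\phi_1}\cap N_{\phi_2}\cap N_{\phi_3}\ne\{e\},
\]
which would immediately give $\phi_1\sim\phi_3$, a chord, contradicting the assumption that the cycle is induced.

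\textbf{Step 2: choice of argument for Step 1.} In general, two non-trivial normal subgroups of $N_{\phi_2}$ can intersect trivially, so the group hypothesis must be used; here $G$ is a finite solvable group. The route I would try first is through minimal normal subgroups. Let $M_1\le N_{\phi_1}\cap N_{\phi_2}$ and $M_3\le N_{\phi_2}\cap N_{\phi_3}$ be minimal normal subgroups of $G$. In a solvable group these are elementary abelian. If $M_1=M_3$ we obtain the chord and are done. Otherwise $M_1\cap M_3=\{e\}$, and $M_1M_3$ is a normal subgroup of $N_{\phi_2}$. The fallback is to exploit the fact that the vertex set of $GAS(G)$ ranges over \emph{all} actions. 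By Theorem~\ref{Action homorphism} applied to the quotient action of $G$ on $G/M_1M_3$, or more generally on any coset space $G/N$ for $N$ normal, every normal subgroup $N\ne G$ of $G$ occurs as some $N_\psi$. So take $\psi$ to be the action of $G$ on $G/(M_1M_3)$ by left multiplication. Then $N_\psi=M_1M_3$ meets $N_{\phi_1}$ in $M_1$ and $N_{\phi_3}$ in $M_3$, and it meets $N_{\phi_2}$ as well. Thus $\psi,\phi_1,\phi_2$ form a triangle, since $N_\psi\cap N_{\phi_1}\supseteq M_1\ne\{e\}$, $N_\psi\cap N_{\phi_2}\supseteq M_1\ne\{e\}$, and $N_{\phi_1}\cap N_{\phi_2}\ne\{e\}$.

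\textbf{Main obstacle.} The difficulty is ensuring that $\psi$ is a genuine vertex distinct from $\phi_1$ and $\phi_2$. Distinctness is unproblematic: even when $N_\psi=N_{\phi_i}$, distinct actions are distinct vertices, and one can change the action (for instance, by permuting $X$) while keeping the same kernel whenever $|X|\ge 2$. The real issue is matching the set $X$, since $GAS(G)$ is defined for a fixed set $X$. To handle this I would build the action on $X$ itself, letting $G$ act through $G/(M_1M_3)$ on a subset of $X$ of suitable size and trivially on the rest. This requires $|X|$ to be at least the index of the kernel, which is automatic because $\phi_1$ already realises a faithful action of $G/N_{\phi_1}$ on $X$ and can be modified accordingly.

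Even if the specific construction needs adjustment, the simplest observation already gives what is needed. Since $\phi_1\sim\phi_2$, every action with the same kernel as $\phi_1$, other than $\phi_1$ itself, is adjacent to both $\phi_1$ and $\phi_2$ (Remark~\ref{normal subgroup}). Such a second action exists because relabelling $X$ by a permutation that does not commute with the image of $\phi_1$ produces a different action with the same kernel. This yields a triangle directly, giving girth $3$.
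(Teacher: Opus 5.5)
Drop your Steps 1 and 2. Your last paragraph is a correct proof, by a genuinely different route from the paper's, once three details below are filled in.

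The paper works in a finite solvable $G$ and uses a minimal normal subgroup $M$ to try to show that there is no induced $n$-cycle with $n>3$ at all. Its key step, from $M\le N_{\phi_k}$ and $N_{\phi_k}\cap N_{\phi_{k+1}}\ne\{e\}$ to $M\le N_{\phi_{k+1}}$, is invalid, and the stronger claim is false. Take $G=\mathbb{Z}_{pqrs}$ with $p,q,r,s$ distinct primes and $|X|\ge pqrs$: actions whose kernels are the subgroups of orders $pq$, $qr$, $rs$, $sp$ form an induced $4$-cycle. Your Step 1 aims at the same chord statement and fails for the same reason. The embedding claim in Step 2 is also unjustified: a faithful action of $G/N_{\phi_1}$ on $X$ gives no faithful action of $G/M_1M_3$ on $X$. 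Your twin argument avoids all of this. It uses only that actions with a common non-trivial kernel are mutually adjacent and have the same neighbours. So it needs neither finiteness nor solvability, and it proves the girth statement directly rather than a (false) structural claim about long induced cycles.

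The three details you still need to supply:

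\emph{Existence of the relabelled action.} Let $\rho$ be the homomorphism of $\phi_1$. Since the trivial action is excluded, $\rho(g)(a)=b\ne a$ for some $g\in G$, $a\in X$. When $|X|\ge 3$, take $c\notin\{a,b\}$ and $\sigma=(b\;c)$. Then $\sigma\rho(g)\sigma^{-1}(a)=c\ne b$, so $g\mapsto\sigma\rho(g)\sigma^{-1}$ is a different action with kernel $N_{\phi_1}$.

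\emph{Distinctness from $\phi_2$.} The new action must differ from $\phi_2$. This is automatic if $N_{\phi_1}\ne N_{\phi_2}$. If $N_{\phi_1}=N_{\phi_2}$, then $N_{\phi_3}\cap N_{\phi_1}=N_{\phi_3}\cap N_{\phi_2}\ne\{e\}$, so $\phi_1,\phi_2,\phi_3$ is already a triangle. This check is needed: the only twin can be $\phi_2$ itself, e.g.\ $\mathbb{Z}_9$ on a $3$-element set has exactly two non-trivial actions, and they share a kernel.

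\emph{The case $|X|=2$.} Your assertion that relabelling works whenever $|X|\ge 2$ is false. For $|X|=2$ the group $S_X$ is abelian, so conjugation changes nothing and each vertex is determined by its kernel. This case needs its own argument. The vertices are the non-trivial homomorphisms $\chi\colon G\to\{\pm 1\}$. If $\chi_1\sim\chi_2$, then $\chi_3=\chi_1\chi_2$ is a third vertex with $\ker\chi_i\cap\ker\chi_j=\ker\chi_1\cap\ker\chi_2\ne\{e\}$ for all $i\ne j$, which gives a triangle.
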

    \begin{proof}
        We claim that there is no $n$-cycle in $gas(G)$ for $n>3$. On the contrary, let there be an $n$-cycle $\phi_1-\phi_2-\dots-\phi(n)-\phi(1)$ with $n>3$. Then $(\cap G_x^{\phi_i})\bigcap (\cap G_x^{\phi_{i+1}})\ne \{e\}$ for $i=1,2,\dots,n-1$ and $(\cap G_x^{\phi_n})\bigcap (\cap G_x^{\phi_{1}})\ne \{e\}$. Since $G$ is a finite solvable group and the subgroups $\cap G_x^{\phi_i}$ are normal in $G$, they must contain some minimal normal subgroups. Let $M$ be a minimal normal subgroup of $G$. Therefore, the normal subgroups that arise from the cycle must contain $M$ or intersect $M$ trivially. Consider one of the $\cap G_x^{\phi_i}$'s for $i=1,2,\dots,n$ which contains $M$, say $\cap G_x^{\phi_k}$. Then as $(\cap G_x^{\phi_k})\bigcap(\cap G_x^{\phi_{k+1}})\ne\{e\}$, therefore, $M\subseteq \cap G_x^{\phi_{k+1}}$. Similarly $M\subseteq \cap G_x^{\phi_{i}}$ for all $i=1,2,\dots,n$. This implies that there is no $n$-cycle in $gas(G)$. Therefore, it can be easily concluded that for $n>3$, there is no $n$-cycle in $GAS(G)$ also. Hence, $girth(GAS(G))=3$.
    \end{proof}
    We now find a condition under which $gas(G)$ of a solvable group $G$ is identical to $GAS(G)$.  
    \begin{thm}\label{gas=GAS}
        Let $GAS(G)$ be a connected graph and $G$ has only one simple subgroup. If $GAS(G)$ has a pendant vertex, then $gas(G)=GAS(G)$.
    \end{thm}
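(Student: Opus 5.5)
We read the statement with the conventions of the preceding discussion: $G$ is a finite solvable group. "Has only one simple subgroup" is taken to mean that $G$ has a unique minimal normal subgroup $M$; in the solvable case $M$ is elementary abelian. The vertices of $GAS(G)$ are the nontrivial actions, and each vertex $\phi$ carries the nontrivial normal subgroup $K_\phi=\bigcap_{x\in X}G_x^{\phi}$. By Remark \ref{normal subgroup}, actions with the same $K_\phi$ form a complete block, and $gas(G)$ keeps exactly one representative per block. So $gas(G)=GAS(G)$ is equivalent to the assertion that the map $\phi\mapsto K_\phi$ is injective on the vertex set. The plan is to use the pendant vertex to show that this map is injective.

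First, uniqueness of $M$ gives adjacency information. Every nontrivial normal subgroup of a finite group contains a minimal normal subgroup, and that must be $M$. Hence $M\subseteq K_\phi$ for every vertex $\phi$. Then $K_\phi\cap K_\psi\supseteq M\neq\{e\}$ for all $\phi,\psi$, so $GAS(G)$ is complete. This is the argument of Theorem \ref{finite solvable n-cycle}, specialised to a single minimal normal subgroup.

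Next, bring in the pendant vertex. A complete graph has a vertex of degree one only when it is $K_2$. So $GAS(G)$ has exactly two vertices $\phi_1,\phi_2$, and the connectedness hypothesis is consistent with this.

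It remains to show $K_{\phi_1}\neq K_{\phi_2}$; this step is the main obstacle. If instead $K_{\phi_1}=K_{\phi_2}$, then $gas(G)$ would be a single vertex while $GAS(G)=K_2$. To rule this out, I would use Theorem \ref{Action homorphism} to identify actions with homomorphisms $G\to S_X$. Whenever a nontrivial action $\phi$ exists, composing with conjugation by a transposition (or another suitable permutation) of $X$ produces further actions with the same kernel, except in degenerate situations. Counting these shows that having exactly two vertices forces the two kernels to differ. If that counting fails, I would fall back on the paper's convention that the vertex set is taken up to identification of actions with equal $\bigcap G_x^{\phi}$ when forming $gas$. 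Under that convention, the two vertices of $GAS(G)=K_2$ must come from distinct normal subgroups, namely $M$ and a larger one containing it. In either case every equal-kernel block is a singleton, so $gas(G)=GAS(G)$.
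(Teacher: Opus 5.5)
You reduce correctly to $GAS(G)=K_2$, but the step you flag as the main obstacle, $K_{\phi_1}\neq K_{\phi_2}$, is a genuine gap and cannot be filled. Your conjugation count, carried out, proves the opposite.

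For the reduction, note that a vertex is only a non-trivial action, so a priori $\bigcap_x G_x^{\phi}$ could be $\{e\}$. Connectedness plus the pendant vertex excludes this. Then every kernel contains the unique minimal normal subgroup, the graph is complete, and so it is $K_2$.

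Now the gap. For $\sigma\in S_X$, the map $\phi^{\sigma}(g,x)=\sigma(\phi(g,\sigma^{-1}(x)))$ is an action with the same $\bigcap_x G_x$ as $\phi$. If $\phi^{\sigma}=\phi$ for every $\sigma$, the image of $G$ in $S_X$ would lie in $Z(S_X)$, which is trivial once $|X|\ge 3$. So $\phi_1$ has a conjugate $\phi_1^{\sigma}\neq\phi_1$. This conjugate is a non-trivial action, hence $\phi_1^{\sigma}=\phi_2$ and $K_{\phi_1}=K_{\phi_2}$. If $|X|=2$, an action is determined by its kernel, a subgroup of index $2$. Two adjacent vertices then always have a common third neighbour, so no pendant vertex occurs.

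A concrete counterexample: let $G=\mathbb{Z}_9$ act on $X=\{a,b,c\}$. The only non-trivial actions send the generator to $(a\,b\,c)$ or $(a\,c\,b)$, and both have $\bigcap_x G_x^{\phi}=\{0,3,6\}$. So $GAS(G)=K_2$ is connected with a pendant vertex, and $G$ has exactly one simple subgroup. Yet $gas(G)=K_1\neq GAS(G)$.

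Your fallback is circular. Identifying actions with equal $\bigcap_x G_x^{\phi}$ is how $gas(G)$ is built, not $GAS(G)$, so applying it to the vertices of $GAS(G)$ assumes the conclusion.

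The paper argues differently. It shows diameter at most $2$, then that $G$ has exactly two normal subgroups $N_1\subsetneq N_2$. The pendant vertex is then the only action with kernel $N_1$ and is adjacent to every action with kernel $N_2$, so each kernel is realised once. But it obtains distinct kernels only from the assertion that a pendant vertex forces two different normal subgroups to occur as some $\bigcap_x G_x^{\phi}$, in the spirit of Remark~\ref{normal subgroup}. Your ``$M$ and a larger one containing it'' is the same assertion.

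That assertion does not follow from the hypotheses: in the example, $\{0,3,6\}$ is the only non-trivial proper normal subgroup. The example meets every hypothesis, with the trivial action removed as in the definition, and violates the conclusion. So no argument can supply the missing step: the statement as written is false.
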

    
    \begin{proof}
        We first prove that the diameter of $gas(G)$ is less than or equal to 2. Let $\phi_1$ and $\phi_2$ be two non-adjacent vertices. Since $G$ is a finite solvable group, there exist two distinct minimal normal subgroups $M_1$ and $M_2$ such that $M_1\subseteq\cap G_x^{\phi_{1}}$ and $M_2\subseteq\cap G_x^{\phi_{2}}$, respectively. Since $M_1M_2$ is a normal subgroup of $G$, it therefore corresponds to a vertex in $gas(G)$, say $\phi_3$. Then $\phi_1-\phi_3-\phi_2$ is a path of length 2. This also concludes that the diameter of $GAS(G)$ is less than or equal to 2.

	We claim that if $gas(G)$ has a pendant vertex, say $\phi_1$, then $gas(G)$ is connected if and only if $G$ has exactly two normal subgroups. Let $gas(G)$ be connected. As $GAS(G)$ has a pendant vertex, the number of normal subgroups of $G$ is at least two. Suppose that $gas(G)$ has at least three vertices, say $\phi_1, \phi_2$ and $\phi_3$. If $\phi_1$ is adjacent to $\phi_2$, then $(\cap G_x^{\phi_1})\bigcap(\cap G_x^{\phi_2})$ is another normal subgroup of $G$. As $\phi_1$ is a pendant vertex, therefore, $(\cap G_x^{\phi_1})\bigcap(\cap G_x^{\phi_2})=(\cap G_x^{\phi_1})$ and thus, $(\cap G_x^{\phi_1})\subset(\cap G_x^{\phi_2})$ and $\cap G_x^{\phi_1}$ is the simple subgroup of $G$. Since the diameter of $GAS(G)$ is less than or equal to 2, therefore, $\phi_2\sim\phi_3$. But then $(\cap G_x^{\phi_2})\bigcap(\cap G_x^{\phi_3})$ contains a simple subgroup of $G$ which can not be $\cap G_x^{\phi_1}$ as $\phi_1\sim \phi_3$ otherwise. Therefore, $G$ has exactly two normal subgroups.
	
	
	The other direction is quite obvious.
	
	Since $GAS(G)$ has a pendant vertex, $G$ has only one group action corresponding to a particular normal subgroup, say $\phi_1$. Then, $\phi_1$ is also a pendant vertex of $gas(G)$. Since $GAS(G)$ is connected, $gas(G)$ is also connected and hence, $G$ has only two normal subgroups. Let $N_1, N_2$ be the normal subgroups of $G$ and $N_1$ be the normal subgroup which is equal to $\cap G_x^{\phi_1}$. If there are at least two group actions $\phi_2$ and $\phi_3$ corresponding to the normal subgroup $N_2$, then $\phi_1$ is adjacent to both $\phi_2$ and $\phi_3$, a contradiction. Therefore, $GAS(G)=gas(G)$. Hence, the theorem.
    \end{proof}


    The following remark defines the independence number of $GAS(G)$.
\begin{rem}
	The minimal normal subgroups of $G$ have trivial intersection. Therefore, the corresponding vertices in $gas(G)$ are mutually non-adjacent. Now, any other normal subgroup of $G$ contains any one of them. Therefore, any other vertex in $gas(G)$ is adjacent to at least one of the minimal normal subgroups. Hence, the independence number of $gas(G)$ is the number of minimal normal subgroups of $G$. Therefore, the independence number of $GAS(G)$ is also the number of minimal normal subgroups of $G$. 
\end{rem}


			

    We will apply the following remark given by Lowell in \cite{Lowell} on derived graphs.
        \begin{rem}\cite{Lowell}\label{Lowell}
            $\Gamma$ is a derived graph of some graph if and only if none of the nine graphs defined by Lowell in \cite{Lowell} is an induced subgraph of $\Gamma$. \end{rem}.
        We also mention a remark given by P. Mathil and J. Kumar in \cite{Mathil} to examine whether $gas(G)$ is a complement of any derived graph or not.
        \begin{rem}\cite{Mathil}\label{Remark Mathil}
            Any graph $\gamma$ is a complement of any derived graph if and only if $\Gamma$ contains one of the nine graphs defined by P. Mathil and J. Kumar in \cite{Mathil} as an induced subgraph.
        \end{rem}
        
    Consider a finite nilpotent group $G$ of order $\prod\limits_{i=1}^kp_i^{\alpha_i}$, where $p_i$'s are distinct prime numbers. It has been observed that Conrad in \cite{Conrad 2} gives the following result on $G$.
\begin{rem}\cite{Conrad 2}
	For every divisor $d$ of $|G|$, there is a normal subgroup of $G$ of order $d$.
\end{rem}

We now establish Theorem $\ref{nilpotent}$ to discuss whether $gas(G)$ is a derived graph or not, where $G$ is not a cyclic group. While drawing the figures, we are not including the vertex corresponding to the whole group (if it is abelian), as it will be adjacent to all the remaining vertices and will not affect the conditions given in Remark \ref{Lowell} and Remark \ref{Remark Mathil}. Here we consider $o(G)=\prod\limits_{i=1}^k p_i^{\alpha_i}$, $k\ge 2$.
\begin{thm}\label{nilpotent}
	$gas(G)$ is a derived graph if and only if $G$ is of order $p_1p_2p_3, p_1p_2$, $p_1^2p_2$ (with only normal Sylow subgroups), $p_1p_2^{3}$, where the Sylow $p_2$-subgroup is either cyclic or non-abelian.
\end{thm}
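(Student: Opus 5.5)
We would first reduce $gas(G)$ to a purely lattice-theoretic object. By Remark \ref{normal subgroup}, the vertices of $gas(G)$ correspond bijectively to the non-trivial normal subgroups of $G$. Two vertices are adjacent exactly when the corresponding normal subgroups intersect non-trivially. Since $G$ is finite nilpotent, $G\cong P_1\times\cdots\times P_k$ with $P_i$ the Sylow $p_i$-subgroups. Every normal subgroup then splits uniquely as $N=N_1\times\cdots\times N_k$ with $N_i\trianglelefteq P_i$. Moreover $N\cap M\neq\{e\}$ if and only if $N_i\cap M_i\neq\{e\}$ for some $i$. So $gas(G)$ is determined by the normal-subgroup lattices of the $P_i$ and this componentwise rule. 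As stated before the theorem, we drop the vertex for $G$ itself, since a universal vertex does not affect Remark \ref{Lowell}. The plan is then to use Remark \ref{Lowell}: exhibit one of Lowell's nine forbidden induced subgraphs (principally the claw $K_{1,3}$) in every excluded case, and show that none occurs in the listed cases.

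For the ``only if'' direction we would look for claws. A claw needs a vertex $N$ together with three normal subgroups $A,B,C$, each meeting $N$ non-trivially but pairwise meeting trivially. Take three minimal normal subgroups lying in different Sylow factors, or in different parts of one Sylow factor. Then $N=AB C$ (or a suitable product) is a centre adjacent to all three, while $A,B,C$ are pairwise non-adjacent. This already rules out $k\ge 4$: the subgroups of order $p_1,p_2,p_3,p_4$ are pairwise non-adjacent, and their product is adjacent to each.

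The remaining exclusions come from counting minimal normal subgroups. If some $P_i$ is non-cyclic abelian, or more generally has $Z(P_i)$ non-cyclic, then $P_i$ contains at least $p_i+1\ge 3$ normal subgroups of order $p_i$. This gives a claw as soon as another prime is present. This bounds the exponents and forces each Sylow factor either to be cyclic or to have cyclic centre. When the exponents are large we would instead look for Lowell's other graphs, such as the $K_5-e$ type or the two triangles sharing an edge with pendant vertices. These arise from chains of normal subgroups in $P_i$ crossed with $P_j$. This step should cut the possibilities down to the orders $p_1p_2p_3$, $p_1p_2$, $p_1^2p_2$ and $p_1p_2^3$.

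For the ``if'' direction we would enumerate $gas(G)$ explicitly in each listed case and exhibit a root graph $H$ whose line graph is $gas(G)$. For $p_1p_2$ there are two isolated-type vertices plus the whole group, giving a path. For $p_1p_2p_3$ the graph on the six proper non-trivial normal subgroups is the line graph of a triangle with pendant edges, equivalently of the configuration with edges indexed by subsets. For $p_1^2p_2$ and for $p_1p_2^3$ with the Sylow $p_2$-subgroup cyclic or non-abelian, the Sylow normal lattice is a chain (or nearly a chain with a unique minimal normal subgroup), and we would write down $H$ directly.

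The main obstacle is the $p_1p_2^3$ case with non-abelian $P_2$ ($D_4$, $Q_8$ or the odd extraspecial groups). There we must list all normal subgroups and check every forbidden subgraph, not merely the claw. The distinction between $Q_8$-type groups, which have a unique minimal normal subgroup, and $D_4$-type groups, which have several normal subgroups of order $4$, needs careful handling. I would first try to show that a unique minimal normal subgroup $Z$ of $P_2$ makes all vertices containing $Z$ a clique. This reduces the check to the structure of the $p_1$-component.
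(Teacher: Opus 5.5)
The step you flag as the main obstacle cannot be carried out, because the claim there is false. This is a genuine gap in the proposal, and the paper's proof has the same gap.

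Take $G=P_1\times P_2$ with $|P_1|=p_1$ and $P_2$ non-abelian of order $p_2^3$. Your reduction is correct as far as it goes:
\begin{itemize}
\item $|Z(P_2)|=p_2$, so $Z(P_2)$ lies in every non-trivial normal subgroup of $P_2$.
\item Hence all vertices $R\times N$ with $N\neq\{e\}$ form a clique.
\item The only other vertex is $v=P_1\times\{e\}$, adjacent exactly to the vertices $P_1\times N$.
\end{itemize}
But $P_2$ has $p_2+3$ non-trivial normal subgroups: its centre, its $p_2+1$ maximal subgroups, and $P_2$. So even after discarding $G$, the vertex $v$ has $p_2+2\ge 4$ neighbours in the clique, and it also has non-neighbours there. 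Take three neighbours $P_1\times N_1, P_1\times N_2, P_1\times N_3$, the non-neighbour $\{e\}\times Z(P_2)$, and $v$. These induce $K_5-e$, which is the paper's own $\Gamma_2$ and one of Lowell's nine forbidden graphs. Explicitly, in $\mathbb{Z}_3\times Q_8$ use $\mathbb{Z}_3\times\{1\}$, $\mathbb{Z}_3\times\langle -1\rangle$, $\mathbb{Z}_3\times\langle i\rangle$, $\mathbb{Z}_3\times\langle j\rangle$ and $\{1\}\times\langle -1\rangle$.

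The paper makes exactly your reduction and then simply asserts that a clique plus one vertex contains no forbidden subgraph. That assertion is wrong, so the non-abelian clause of the theorem is false and cannot be proved. Your proposed $Q_8$ versus $D_4$ distinction is also not real. $\langle r^2\rangle$ is the unique minimal normal subgroup of $D_4$, and $Q_8$, like $D_4$, has exactly three normal subgroups of order $4$; the two behave identically here.

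Carried through, your own method gives a clean classification.
\begin{itemize}
\item If some $Z(P_i)$ is non-cyclic you get a claw, as you say. Otherwise each $P_i$ has a unique minimal normal subgroup.
\item For $k=2$, the graph on proper non-trivial normal subgroups is then two cliques $A\cup C$ and $B\cup C$ with no edges between $A$ and $B$. Here $|A|=m$ and $|B|=n$ are the numbers of non-trivial normal subgroups of $P_1$ and $P_2$, and $|C|=mn-1$.
\item This graph is a line graph if and only if $mn\le 3$; for $mn\ge 4$ you have $|C|\ge 3$ and get $K_5-e$.
\item For $k\ge 3$, your claw argument works whenever the product of three prime-order normal subgroups is proper, that is, unless $|G|=p_1p_2p_3$. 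You only stated it for $k\ge 4$.
\end{itemize}
So the true list is the cyclic groups of order $p_1p_2$, $p_1^2p_2$, $p_1p_2^3$ and $p_1p_2p_3$. In particular, the $p_1^2p_2$ clause must be read as requiring a cyclic Sylow $p_1$-subgroup.

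Finally, your stated reason for deleting the vertex $G$ is false: a universal vertex can create forbidden subgraphs. With $G$ kept, $\mathbb{Z}_{p_1p_2p_3}$ has a claw centred at $G$. Likewise $\mathbb{Z}_{p_1p_2^3}$ has a $K_5-e$ on $P_1\times\{e\}$, $P_1\times Z_1$, $P_1\times Z_2$, $G$, $\{e\}\times Z_1$, where $Z_1<Z_2$ are the proper non-trivial subgroups of $P_2$. The exclusion of $G$ therefore has to be part of the definition of the graph, not a harmless simplification.
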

\begin{proof}
	It is clear from the construction that $gas(G)$ for a group of order $p_1p_2$ is a derived graph. Consider that $o(G)=p_1^2p_2$ and all the Sylow subgroups are normal. Then the graph $gas(G)$ is a derived graph as shown in Figure \ref{figure p1^2p2}, where we are not considering the vertex corresponding to $G$.
	\begin{figure}[h]\begin{center}
			\begin{tikzpicture}[scale=.5, minimum size=0.1cm]
				\Vertex[x=0,y=0]{A} \Vertex[x=5,y=0]{B} \Vertex[x=2.5,y=3]{C} \Vertex[x=0,y=3]{D} 
				\Edge (A)(B) \Edge (A)(C) \Edge (B)(C) \Edge (C)(D) 
			\end{tikzpicture}
		\end{center}
	\caption{$gas(G)$ of a nilpotent group of order $p_1^2p_2$, where all the Sylow subgroups are normal.}\label{figure p1^2p2}
	\end{figure}

 Suppose $G$ is of order $p_1p_2p_3$. Then $gas(G)$ is a derived graph as shown in Figure \ref{figure p1p2p3 compliment}.
	\begin{figure}[h]
		\begin{center}
			\includegraphics[width=8cm]{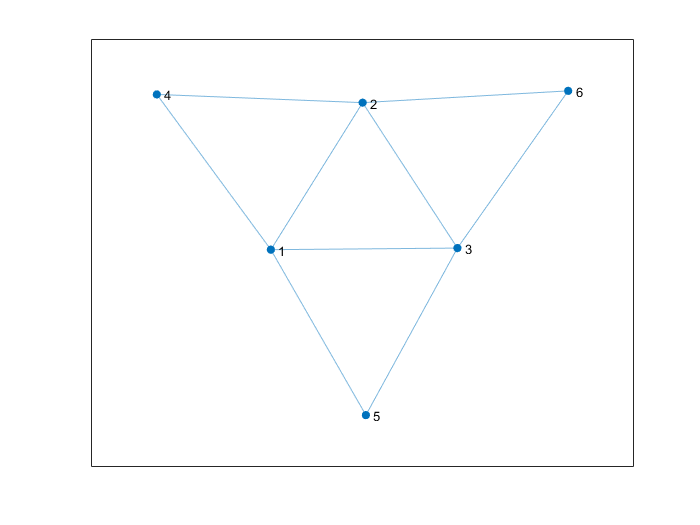}
		\end{center}
		\caption{$gas(G)$ of a nilpotent group of order $p_1p_2p_3$}\label{figure p1p2p3 compliment}
	\end{figure} Now consider $o(G)=p_1p_2^{3}$. Then the Sylow $p_1$-subgroup, say $P$ is normal. Let $Q$ be the Sylow $p_2$-subgroup. We have $G=P\times Q$. If the Sylow $p_2$-subgroup is normal, then $gas(G)$ is a derived graph as shown in Figure \ref{figure p1^3p2 1}.
\begin{figure}[h]\begin{center}
		\begin{tikzpicture}[scale=.5, minimum size=0.1cm]
			\Vertex[x=2,y=0]{A} \Vertex[x=5,y=0]{B} \Vertex[x=0,y=2.5]{C} \Vertex[x=3.5,y=5]{D} \Vertex[x=7,y=2.5]{E} \Vertex[x=9,y=1.25]{F}
			\Edge (A)(B) \Edge (A)(C) \Edge(A)(D) \Edge(A)(E) \Edge (B)(C) \Edge (B)(D) \Edge (B)(E) \Edge (C)(D) \Edge (E)(C) \Edge (D)(E) \Edge (B)(F) \Edge (E)(F) 
		\end{tikzpicture}
	\end{center}
	\caption{$gas(G)$ of a nilpotent group of order $p_1^3p_2$, where all the Sylow subgroups are normal.}\label{figure p1^3p2 1}
\end{figure}
 Now assume that $Q$ is non-abelian. As $Q$ is a $p_2$-group, $Q$ has a non-trivial center. Therefore, $o(Z(Q))=p$. In particular, $Z(Q)$ is the unique minimal normal subgroup of $Q$. For any non-trivial normal subgroup $N$ of $Q$, the center $Z(N)$ is non-trivial as $N$ is a $p_2$-group. Also, $Z(N)\cap Z(Q)$ is non-trivial as $N\trianglelefteq Q$. Therefore, every normal subgroup of $P$ contains $Z(Q)$. Thus $Z(Q)\le N$ for every non-trivial normal subgroup $N$ of $Q$. Now consider any two non-trivial normal subgroups of $G$ as $H=R_1\times N_1$ and $K=R_2\times N_2$, where $N_1,N_2\trianglelefteq Q$ and $R_1$ and $R_2$ are either $\{e\}$ or $P$. Then $H\cap K=\left(R_1\cap R_2\right)\times \left(N_1\cap N_2\right)$. As $Z(Q)\le N_1$ and $Z(Q)\le N_2$, $Z(Q)\le N_1\cap N_2$ and thus $N_1\cap N_2\ne \{e\}$. Therefore, $\{e\}\times Z(Q)\subseteq H\cap K$. This implies that the corresponding vertices of $H$ and $K$ are adjacent. We now only left with the vertex corresponding to $P\times \{e\}$. The remaining all vertices are adjacent to one another. They cannot contain any of the forbidden induced subgraphs of Figure \ref{figure who}. Hence, $gas(G)$ is a derived graph in this case.

To prove the other direction, first consider $k\ge 3$ such that $\alpha_i\ge 2$ for at least one $i=1,2,3,\dots, n$. Let $\phi_1, \phi_2, \phi_3, \phi_4$ be four vertices of $gas(G)$, where $\cap G_x^{\phi_1}, \cap G_x^{\phi_2}, \cap G_x^{\phi_3}$ and $\cap G_x^{\phi_4}$ are of order $p_1, p_2, p_3$ and $\prod\limits_{i=1}^3p_i$, respectively. Then the subgraph induced by the vertices $\phi_1, \phi_2, \phi_3, \phi_4$ is isomorphic to $\Gamma_1$ of Figure \ref{figure who 1}.
\begin{figure}[h]
            \centering
            \begin{tikzpicture}[scale=.6, minimum size=0.1cm]
				
		\Vertex[x=-2,y=6]{A} \Vertex[x=0,y=4]{B} \Vertex[x=0,y=2]{C} \Vertex[x=2,y=6]{D}
		\Edge (A)(B) \Edge (B)(C) \Edge (B)(D) 
		\node[align=center]		{$\Gamma_1$};
			\end{tikzpicture}
            \caption{One of the nine graphs defined by Lowell in \cite{Lowell}}
            \label{figure who 1}
        \end{figure}
        Therefore, by Remark \ref{Lowell}, $gas(G)$ is not a derived graph. Now assume that $k=2$ and $\alpha_1, \alpha_2\ge 2$. Consider the subgraph induced by the vertices $\phi_1, \phi_2, \phi_3, \phi_4$ and $\phi_5$ of $gas(G)$ such that $\cap G_x^{\phi^1}, \cap G_x^{\phi^2}, \cap G_x^{\phi^3}, \cap G_x^{\phi^4}, \cap G_x^{\phi^5}$ are of order $p_1, p_2, p_1p_2, p_1^2p_2, p_1p_2^2$, respectively. It is isomorphic to the graph $\Gamma_2$ of Figure \ref{figure who 3}.
        \begin{figure}[h]
            \centering
            \begin{tikzpicture}[scale=.6, minimum size=0.1cm]
				\Vertex[x=-3,y=2]{A} \Vertex[x=3,y=2]{B} \Vertex[x=0,y=3]{C} \Vertex[x=0,y=4.5]{D} \Vertex[x=0,y=6]{E}
				\Edge (A)(B) \Edge (A)(C) \Edge (A)(D) \Edge (A)(E) \Edge (E)(B) \Edge (E)(D) \Edge (B)(C) \Edge (B)(D) \Edge (D)(C)    
				\node[align=center]
				{$\Gamma_2$};
			\end{tikzpicture}
            \caption{One of the nine graphs defined by Lowell in \cite{Lowell}}
            \label{figure who 3}
        \end{figure}
        Therefore, in this case, $gas(G)$ is not a derived graph. Again consider that $\alpha_1=1$ and $\alpha_2>3$. Let $\phi_1, \phi_2, \phi_3, \phi_4$ and $\phi_5$ be five vertices of $gas(G)$ such that $\cap G_x^{\phi^1}, \cap G_x^{\phi^2}, \cap G_x^{\phi^3}, \cap G_x^{\phi^4}, \cap G_x^{\phi^5}$ are of order $p_2, p_2^2, p_2^3, p_2^4, p_1$, respectively. It can be seen that the subgraph induced by these five vertices is also isomorphic to $\Gamma_2$ of Figure \ref{figure who 3}. Consider $o(G)=p_1p_2^3$. Then the Sylow $p_1$-subgroup, say $P$, is normal. Let $Q$ be the Sylow $p_2$-subgroup. We have $G=P\times Q$. We first assume that $Q$ is abelian but non-cyclic. Then $Q$ is isomorphic to either $\mathbb{Z}_{p_2}\times \mathbb{Z}_{p_2}\times\mathbb{Z}_{p_2}$ or $\mathbb{Z}_{p_2}^2\times\mathbb{Z}_{p_2}$. In both the cases, there are at least $p_2+1$ normal subgroups of order $p_2$. The corresponding actions together with the action corresponding to $Q$ make a $K_{1,3}$ induced subgraph in $gas(G)$. Therefore, by Remark \ref{Lowell}, $gas(G)$ is not a derived graph in this case. Let $o(G)=p_1^2p_2$. If the Sylow $p_1$-subgroup is non-cyclic, then it is isomorphic to $\mathbb{Z}_{p_1}\times\mathbb{Z}_{p_1}$. Therefore, there are at least three subgroups of order $p_1$. The corresponding actions together with the action corresponding to the Sylow $p_1$-subgroup make a subgraph isomorphic to $\Gamma_1$ of Figure \ref{figure who 1}. Hence, the result. 
\end{proof}

    We now explore the conditions under which $gas(G)$ is a complement of a derived graph in Theorem \ref{nilpotent second}, where $G$ is of order $\prod\limits_{i=1}^kp_i^{\alpha_i}$ with $k>1$. 
\begin{thm}\label{nilpotent second}
	$gas(G)$ is a complement of any derived graph if and only if $G$ is of order $p_1p_2p_3$, $p_1p_2, p_1^2p_2$ with cyclic Sylow $p_1$-subgroup, $4p_2$, where $p_2\ne 2$, or $p_1^2p_2^2$ with cyclic Sylow subgroups.
\end{thm}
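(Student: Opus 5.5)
The plan mirrors the proof of Theorem \ref{nilpotent}, now using Remark \ref{Remark Mathil} instead of Remark \ref{Lowell}. The condition for being the complement of a derived graph is that the complement of $gas(G)$ avoids the nine Lowell graphs. Equivalently, $gas(G)$ must avoid the complements of those nine graphs, which are the graphs of \cite{Mathil}. We work entirely with the normal-subgroup model of $gas(G)$ given by Remark \ref{normal subgroup}. Vertices are the non-trivial normal subgroups of $G$, and two are adjacent exactly when they meet non-trivially. Since $G$ is nilpotent, $G=P_1\times\dots\times P_k$ with $P_i$ its Sylow subgroups, so every normal subgroup factors as $N=N_1\times\dots\times N_k$ with $N_i\trianglelefteq P_i$. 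Two such subgroups are non-adjacent iff $N_i\cap M_i=\{e\}$ for every $i$. As in Theorem \ref{nilpotent}, we may drop the vertex $G$, since it is universal in $gas(G)$ and hence isolated in the complement. It therefore never creates or destroys a forbidden induced subgraph, because none of Lowell's graphs has an isolated vertex.

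\emph{Sufficiency.} We list $gas(G)$ explicitly for each order in the statement and check that its complement is a line graph.
\begin{itemize}
\item For $p_1p_2$ the complement of $gas(G)$ is $K_2$.
\item For $p_1p_2p_3$ the six proper non-trivial normal subgroups are indexed by nonempty proper subsets $S\subsetneq\{1,2,3\}$. Two are non-adjacent iff the subsets are disjoint, so the complement is the line graph of a small graph: a triangle with pendant edges.
\item For $p_1^2p_2$ with cyclic $P_1$, the complement is a path, as seen from Figure \ref{figure p1^2p2}.
\item For $p_1^2p_2^2$ with both Sylow subgroups cyclic, the normal subgroups correspond to pairs $(a,b)\ne(0,0)$ with $0\le a,b\le 2$. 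Non-adjacency means $a=0$ or $b=0$ on the two sides in complementary positions. The complement is a bipartite graph between $\{(a,0)\}$ and $\{(0,b)\}$ plus isolated vertices, namely $K_{2,2}$, which is the line graph of $C_4$.
\item For $4p_2$, the case $P_1\cong\mathbb Z_4$ is covered above. For $P_1\cong\mathbb Z_2^2$ there are three subgroups of order $2$. These are pairwise non-adjacent to each other and to the $p_2$-part, giving a complement containing $K_4$ on those four vertices together with a few further edges. We verify directly that this is a line graph, for instance of $K_{1,4}$ with extra structure.
\end{itemize}

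\emph{Necessity.} We show that every other order yields an induced copy of one of the graphs of \cite{Mathil} in $gas(G)$. Equivalently, the complement contains one of Lowell's graphs, most conveniently the claw $K_{1,3}$ or $\Gamma_2$ of Figure \ref{figure who 3}. The cases are as follows.
\begin{itemize}
\item If $k\ge4$, take the subgroups of orders $p_1$, $p_2$, $p_3$, $p_4$ together with $p_2p_3p_4$. The first is non-adjacent to the other four, while $p_2\sim p_2p_3p_4$ and so on. Choosing instead $p_1$, $p_2$, $p_3$, $p_4$, we can read off an induced claw in the complement centred at $p_1$: the leaves are $p_2$, $p_3$, $p_4$, which are pairwise non-adjacent in $gas(G)$. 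That is not a claw in the complement, so we adjust and use the centre $p_1$ with leaves $p_2$, $p_3$, $p_2p_3p_4$-type subgroups chosen pairwise intersecting. For example, take the leaves $p_2p_3$, $p_2p_4$, $p_3p_4$. These pairwise meet, so they are pairwise adjacent in $gas(G)$ and pairwise non-adjacent in the complement, and each avoids $p_1$. This is a claw in the complement.
\item If $k=3$ with some $\alpha_i\ge2$, the same claw construction works using the extra layer of $P_i$. Take the centre of order $p_1$ and the leaves $p_2p_3$, $p_2^2p_3$ (or $p_2p_3^2$), and $p_2$-related subgroups avoiding $p_1$.
\item For $k=2$, the order is large enough to force three pairwise intersecting subgroups all avoiding a fixed one. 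This happens if $\alpha_1\ge3$, if $\alpha_2\ge3$, if $\alpha_1=\alpha_2=2$ with a non-cyclic Sylow subgroup, or if $p_1^2p_2$ has non-cyclic $P_1$ and $p_1$ odd. For instance, with $\alpha_2\ge3$ take $P_1$ against $p_2$, $p_2^2$, $p_2^3$. For non-cyclic $P_1\cong\mathbb Z_{p_1}^2$ with $p_1$ odd, there are at least $p_1+1\ge4$ subgroups of order $p_1$. These are pairwise non-adjacent, giving $K_4$-plus structure in the complement, and together with $P_2$ they give $K_5$ minus edges. We would exhibit $K_{1,3}$ or $\Gamma_2$ from this configuration.
\end{itemize}

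The main obstacle is the small exceptional cases, $4p_2$ versus $p_1^2p_2$ with $p_1$ odd and non-cyclic $P_1$, and $p_1^2p_2^2$ with non-cyclic factors. There the distinction hinges on exactly how many order-$p_1$ subgroups exist ($3$ versus $\ge4$), and we must match the resulting configuration precisely against the specific nine graphs of \cite{Mathil}. I would first settle these by drawing the full complement graph and testing for the claw and the other Lowell graphs directly.
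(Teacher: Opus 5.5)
Your framework matches the paper's: the normal-subgroup model, discarding the universal vertex $G$, and searching for forbidden induced subgraphs (mainly a claw in the complement). Your necessity claws are essentially right. They are in fact sturdier than the paper's: its $k\ge 3$ configuration of orders $p_1,p_1^2,p_2,p_3$ induces at most one edge, not $\overline{\Gamma_1}$, and it never treats square-free orders with $k\ge 4$.

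The genuine gap is the sufficiency step for $|G|=4p_2$ with Sylow $2$-subgroup $\mathbb{Z}_2\times\mathbb{Z}_2$. You defer it to a direct check, and that check fails. Let $A,B,C$ be the three subgroups of order $2$ and $Q$ the Sylow $p_2$-subgroup. Then $A\cap Q=A\cap BQ=A\cap CQ=\{e\}$, while $Q$, $BQ$, $CQ$ pairwise meet in $Q$. So $\{A,Q,BQ,CQ\}$ induces $K_3\cup K_1=\overline{\Gamma_1}$ in $gas(G)$, which is an induced claw centred at $A$ in the complement. Hence this complement is not a derived graph, and the statement as written is false for $\mathbb{Z}_2^2\times\mathbb{Z}_{p_2}$. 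The paper supports this case only by a figure.

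This claw is exactly your own mechanism: three pairwise intersecting subgroups all avoiding a fixed one. It uses only three subgroups of order $p_1$, so the $3$-versus-$\ge 4$ distinction you single out as the main obstacle is irrelevant for the claw. That distinction matters only for the paper's $\overline{\Gamma_2}=K_2\cup 3K_1$ construction. The same claw (centre a subgroup of order $p_1$, leaves $Q$, $BQ$, $CQ$) also finishes your unfinished case of non-cyclic $P_1$, for every prime $p_1$.

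The correct classification is therefore: $G$ cyclic of order $p_1p_2$, $p_1p_2p_3$, $p_1^2p_2$ or $p_1^2p_2^2$. The $4p_2$ case survives only when $P_1\cong\mathbb{Z}_4$, which is already covered by the cyclic $p_1^2p_2$ case.

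A minor slip: for $p_1p_2p_3$ the complement is the net (a triangle with a pendant edge at each vertex) plus an isolated vertex. The net is the line graph of $K_{1,3}$ with every edge subdivided, not of the net itself.
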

\begin{proof}
	It can be seen from the construction of $gas(G)$ for the group $G$ of order $p_1p_2$ and $p_1^2p_2$ with cyclic Sylow $p_1$-subgroup that $gas(G)$ is a complement of a derived graph by Remark \ref{Remark Mathil}. Also from Figure \ref{figure p1p2p3 compliment}, Figure \ref{figure p1^2p2^2 compliment} and Figure \ref{figure p1p1p2 compliment} it can be seen that $gas(G)$ is a complement of a derived graph for any groups of orders $p_1p_2p_3$, $4p_2$, where $p_2\ne 2$, or $p_1^2p_2^2$ with cyclic Sylow subgroups, respectively, by Remark \ref{Remark Mathil}.
\begin{figure}[h]
	\begin{center}
		\includegraphics[width=7cm]{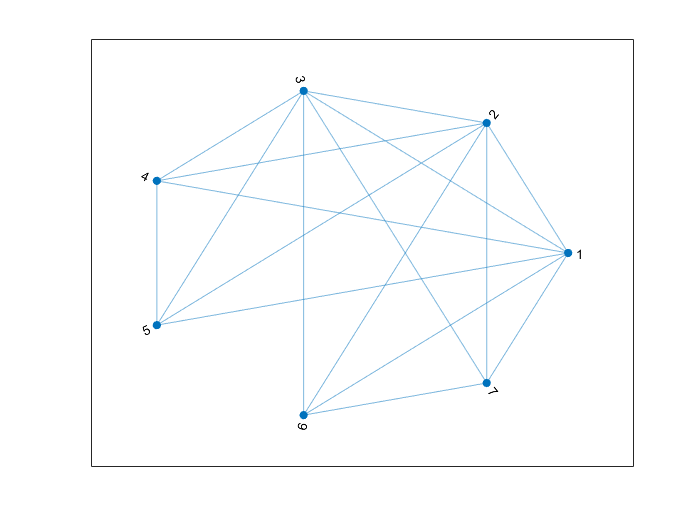}
	\end{center}
	\caption{$gas(G)$ of a nilpotent group of order $p_1^2p_2^2$, where the Sylow subgroups are cyclic}\label{figure p1^2p2^2 compliment}
\end{figure}
\begin{figure}[h]
	\begin{center}
		\includegraphics[width=7cm]{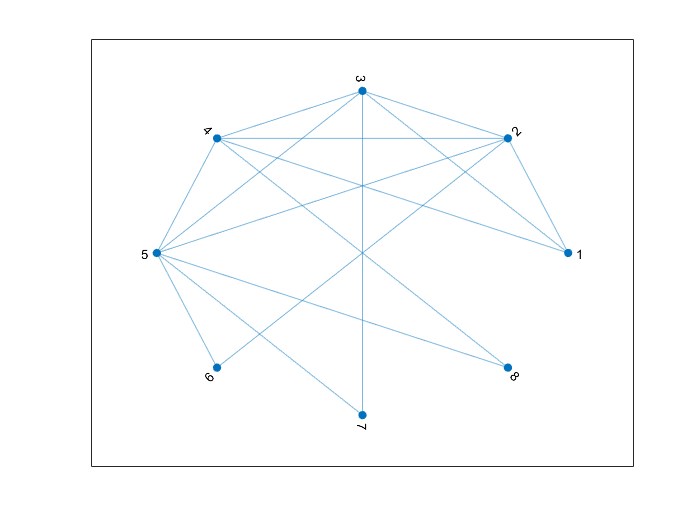}
	\end{center}
	\caption{$gas(G)$ of a nilpotent group of order $4p_2$, where the Sylow $2$-subgroup is not cyclic}\label{figure p1p1p2 compliment}
\end{figure}
	
	To prove the other direction, first assume that $k\ge 3$ such that $\alpha_i>1$ for at least one $i=1,2,\dots,k$. Consider $\alpha_1\ge 2$. Let $\phi_1, \phi_2, \phi_3, \phi_4$ be four vertices of $gas(G)$, where $\cap G_x^{\phi^1}, \cap G_x^{\phi^2}, \cap G_x^{\phi^3}$ and $\cap G_x^{\phi^4}$ are of order $p_1, p_1^2, p_2$ and $p_3$, respectively. Then the subgraph induced by the vertices $\phi_1, \phi_2, \phi_3, \phi_4$ is isomorphic to $\overline{\Gamma_1}$ in Figure \ref{fig}.
    
    \begin{figure}[h]
            \centering
            \begin{tikzpicture}[scale=.6, minimum size=0.1cm]

				\Vertex[x=-2,y=6]{A} \Vertex[x=0,y=4]{B} \Vertex[x=0,y=2]{C} \Vertex[x=2,y=6]{D}
				\Edge (A)(C) \Edge (D)(C) \Edge (A)(D) 
				\node[align=center]
				{$\overline{\Gamma_1}$};
			\end{tikzpicture}
            \caption{One of the nine graphs defined by Mathil and Kumar in \cite{Mathil}}\label{fig}            
        \end{figure}
        Therefore, by Remark \ref{Lowell}, $gas(G)$ is not a complement of a derived graph. Now assume that $k=2$ and one of $\alpha_1, \alpha_2$'s is greater than 2, say $\alpha_1$. Consider the subgraph induced by the vertices $\phi_1, \phi_2, \phi_3$ and $\phi_4$ of $gas(G)$ such that $\cap G_x^{\phi^1}, \cap G_x^{\phi^2}, \cap G_x^{\phi^3}, \cap G_x^{\phi^4}$ are of order $p_1, p_1^2, p_1^3, p_2$, respectively. It is also isomorphic to the graph $\overline\Gamma_1$ of Figure \ref{fig}. Therefore, $gas(G)$ is not a complement of a derived graph. Consider that $G$ has order $p^2q^2$. Then $G\cong P\times Q$, where $P$ is the Sylow $p$-subgroup and $Q$ is the Sylow $q$-subgroup of $G$, both of which are also normal. If any one of them, say $P$, is not cyclic, then $P$ has three subgroups of order $p$ and all of them mutually intersect trivially. This implies that $\overline{\Gamma}_2$ is present in the graph made by the subgroups of order $p$, the Sylow $q$-subgroup and one of its subgroups of order $q$.
        
        \begin{figure}[h]
            \centering
            \begin{tikzpicture}[scale=.6, minimum size=0.1cm]

				\Vertex[x=-3,y=2]{A} \Vertex[x=3,y=2]{B} \Vertex[x=0,y=3]{C} \Vertex[x=0,y=4.5]{D} \Vertex[x=0,y=6]{E}
		\Edge (A)(B) 
				\node[align=center]
				{$\overline{\Gamma_2}$};
			\end{tikzpicture}
            \caption{One of the nine graphs defined by Mathil and Kumar in \cite{Mathil}}\label{fig 2}            
        \end{figure}
        Therefore, Sylow subgroups are cyclic. Finally, consider that $o(G)=p_1^2p_2$. If the Sylow $p_1$-subgroup is non-cyclic, then there exists $p_1+1$ vertices of order $p_1$. If $p_1\ge 3$, then we have at least four subgroups of order $p_1$, say $P_1,P_2,P_3,P_4$. Let $Q$ be the Sylow $q$-subgroup of $G$. Then the vertices $P_1,P_2,P_3,P_4Q,Q$ make an induced graph isomorphic to $\overline{\Gamma}_2$. Thus, $p_1=2$. Hence, the result. 
\end{proof}

	\section*{Conflict of interest}
	The authors have no conflict of interest.
    \section*{Data availability statement}
    The manuscript has no associated data.
	\section*{Funding} Not applicable.

\end{document}